\renewcommand{\orcid}[1]{\href{https://orcid.org/#1}{\textcolor[HTML]{A6CE39}{orcid.org/#1}}}
\setlist[enumerate]{leftmargin=.5in}
\setlist[itemize]{leftmargin=.5in}
\crefname{hypothesis}{Hypothesis}{Hypotheses}
\title{Generalized upwind summation-by-parts operators and their application to nodal discontinuous Galerkin methods
\thanks{
\monthyeardate\today
\corresponding{Jan Glaubitz}
}}
\author{
{Jan Glaubitz\thanks{
Department of Aeronautics and Astronautics, Massachusetts Institute of Technology, USA
(\email{jan.glaubitz@liu.se}, \orcid{0000-0002-3434-5563})
} }\thanks{
Department of Mathematics, Link\"oping University, Sweden
(\email{andrew.ross.winters@liu.se}, \orcid{0000-0002-5902-1522})
}
\and
Hendrik Ranocha\thanks{
Institute of Mathematics, Johannes Gutenberg University Mainz, Germany
(\email{hendrik.ranocha@uni-mainz.de}, \orcid{0000-0002-3456-2277})
}
\and
Andrew R.\ Winters\footnotemark[3]
\and
Michael Schlottke-Lakemper\thanks{
High-Performance Scientific Computing, University of Augsburg, Germany \& High-Performance Computing Center Stuttgart, Germany
(\email{michael.schlottke-lakemper@uni-a.de}, \orcid{0000-0002-3195-2536})
}
\and
Philipp \"Offner\thanks{
Institute of Mathematics, TU Clausthal, Germany
(\email{mail@philippoeffner.de}, \orcid{0000-0002-1367-1917})
}
\and
Gregor Gassner\thanks{
Department of Mathematics and Computer Science \& Center for Data and Simulation Science, University of Cologne, Germany
(\email{ggassner@uni-koeln.de}, \orcid{0000-0002-1752-1158})
}
}
\DeclareMathOperator{\diag}{diag}
\newcommand{\scp}[2]{\left\langle{#1, #2}\right\rangle}
\renewcommand{\d}{\mathrm{d}}
\newcommand{\intd}{\, \mathrm{d}}
\newcommand{\N}{\mathbb{N}}
\newcommand{\R}{\mathbb{R}}
\newcommand{\fnum}{f^{\operatorname{num}}}
\newsavebox{\DelimiterBox}
\newlength{\DelimiterHeight}
\newlength{\DelimiterDepth}
\newsavebox{\ArgumentBox}
\newlength{\ArgumentHeight}
\newlength{\ArgumentDepth}
\newlength{\ResizedDelimiterHeight}
\newlength{\ResizedDelimiterDepth}
\begin{document}

\maketitle

\begin{abstract}
High-order numerical methods for conservation laws are highly sought after due to their potential efficiency.  
However, it is challenging to ensure their robustness, particularly for under-resolved flows.  
Baseline high-order methods often incorporate stabilization techniques that must be applied judiciously---sufficient to ensure simulation stability but restrained enough to prevent excessive dissipation and loss of resolution. 
Recent studies have demonstrated that combining upwind summation-by-parts (USBP) operators with flux vector splitting can increase the robustness of finite difference (FD) schemes without introducing excessive artificial dissipation. 
This work investigates whether the same approach can be applied to nodal discontinuous Galerkin (DG) methods.
To this end, we demonstrate the existence of USBP operators on arbitrary grid points and provide a straightforward procedure for their construction. 
Our discussion encompasses a broad class of USBP operators, not limited to equidistant grid points, and enables the development of novel USBP operators on Legendre--Gauss--Lobatto (LGL) points that are well-suited for nodal DG methods.
We then examine the robustness properties of the resulting DG-USBP methods for challenging examples of the compressible Euler equations, such as the Kelvin--Helmholtz instability.
Similar to high-order FD-USBP schemes, we find that combining flux vector splitting techniques with DG-USBP operators does not lead to excessive artificial dissipation. 
Furthermore, we find that combining lower-order DG-USBP operators on three LGL points with flux vector splitting indeed increases the robustness of nodal DG methods. 
However, we also observe that higher-order USBP operators offer less improvement in robustness for DG methods compared to FD schemes. 
We provide evidence that this can be attributed to USBP methods adding dissipation only to unresolved modes, as FD schemes typically have more unresolved modes than nodal DG methods.
\end{abstract}

\begin{keywords}
	Upwind summation-by-parts operators, conservation laws, flux vector splittings, nodal discontinuous Galerkin methods
\end{keywords}

\begin{AMS}
	65M12, 65M60, 65M70, 65D25
\end{AMS}

\begin{Code}
    \url{https://github.com/trixi-framework/paper-2024-generalized-upwind-sbp}
\end{Code}

\begin{DOI}
	\url{https://doi.org/10.1016/j.jcp.2025.113841}
\end{DOI}

\section{Introduction}
\label{sec:introduction}

The simulation of many phenomena in science and engineering relies on efficient and robust methods for numerically solving hyperbolic conservation laws. 
Entropy-based methods have become increasingly favored for developing robust high-order methods applicable to various applications.
Initially introduced for second-order methods in the seminal work \cite{tadmor1987numerical,tadmor2003entropy}, these schemes have since evolved significantly.
Expanding on these foundational studies, \cite{lefloch2002fully,fjordholm2012arbitrarily} introduced high-order extensions tailored to periodic domains.
Further advancements were made in \cite{fisher2013high}, where the authors developed the \emph{flux differencing} approach to entropy stability.
This approach, based on SBP operators and two-point numerical fluxes, naturally extends to high-order operators based on complex geometries.

SBP operators are instrumental in ensuring a discrete analog of integration by parts, thereby facilitating the extension of properties of numerical fluxes to high-order methods.
SBP operators were initially developed in the 1970s, specifically for FD methods \cite{kreiss1974finite,kreiss1977existence,scherer1977energy}.
More contemporary overviews and references are provided in \cite{svard2014review,fernandez2014review,chen2020review}.
While the corpus of literature on SBP-based methods is too extensive to enumerate fully, notable implementations span a range of schemes, including spectral element \cite{carpenter2014entropy}, DG \cite{gassner2013skew,gassner2016split,chen2017entropy}, continuous Galerkin \cite{abgrall2020analysis,abgrall2021analysis}, finite volume (FV) \cite{nordstrom2001finite,nordstrom2003finite}, flux reconstruction (FR) \cite{ranocha2016summation,offner2018stability,cicchino2022nonlinearly,cicchino2022provably,montoya2022unifying}, and radial basis function (RBF) \cite{glaubitz2022energy} methods.

Several studies have established that flux differencing schemes, particularly DG spectral element methods (DGSEMs), are effective for under-resolved flows, such as the Taylor--Green vortex, when constructed to ensure entropy stability and positivity preservation. 
These properties are critical, especially in real applications involving strong shocks and high Reynolds numbers. 
See \cite{gassner2016split,sjogreen2018high,klose2020assessing,manzanero2020entropy,manzanero2020entropy2,rojas2021robustness,parsani2021high,lodares2022entropy,ntoukas2022entropy,yan2023entropy,montoya2024efficient} and references therein.
Furthermore, see \cite{abgrall2023,coqueil2021,renac2019entropy,renac2021} for additional applications of the DGSEM approach to multicomponent and multiphase flows. 
Entropy-stable high-order methods, like those studied in \cite{manzanero2020entropy,parsani2021high,yan2023entropy,montoya2024efficient}, possessed enhanced robustness for under-resolved flows without any built-in positivity preservation considerations.
However, works such as \cite{gassner2022stability,ranocha2021preventing} have demonstrated linear stability issues for high-order entropy-stable methods.
Despite their non-linear stability, these schemes exhibit shortcomings in seemingly straightforward test cases, such as Euler equations with constant velocity and pressure---reducing them to a simple linear advection for the density.
In contrast, central schemes without entropy properties perform well in this case but are prone to failure in more challenging simulations of under-resolved flows. 
Some resulting challenges in numerical simulations can be mitigated using shock-capturing or invariant domain-preserving techniques.
For example, issues such as non-negativity and spurious entropy growth can be effectively addressed by implementing positivity-preserving methods \cite{zhang2011maximum,qin2018implicit} and artificial viscosity approaches \cite{persson2006sub,ranocha2018stability}, respectively. 
However, it is crucial to apply these stabilization techniques judiciously---sufficient to ensure simulation stability but restrained enough to prevent excessive dissipation and loss of resolution. 

In this work, we are concerned with developing robust and efficient baseline schemes by employing USBP operators within a nodal DG framework---without focusing on positivity preservation, however.
Introduced in \cite{mattsson2017diagonal} for FD methods, USBP operators emerged as a particular class of dual-pair SBP operators \cite{dovgilovich2015high}.
Henceforth, we refer to these operators as \emph{FD-USBP operators}.
These FD-USBP operators have been extended to staggered grids in \cite{mattsson2018compatible} and applied in the context of the shallow water equations and residual-based artificial viscosity methods in \cite{lundgren2020efficient,stiernstrom2021residual}, respectively. 
Furthermore, for instance, \cite{ortleb2023stability} demonstrated that DG discretizations of linear advection-diffusion equations can be analyzed using the framework of global USBP operators.
For nonlinear conservation laws, USBP operators are combined with flux vector splitting techniques \cite[Chapter 8]{toro2013riemann}.
Flux vector splitting techniques were extensively developed and applied throughout the previous century \cite{steger1979flux,vanleer1982flux,buning1982solution,hanel1987accuracy,liou1991high,coirier1991numerical} yet eventually fell out of favor.
This shift occurred as these methods were superseded by alternative approaches, primarily due to the considerable numerical dissipation they introduced \cite{vanleer1991flux}.
Yet, previous studies \cite{ranocha2023high,duru2024dual} have demonstrated that combining high-order USBP operators with flux vector splittings can increase the robustness of FD schemes for under-resolved simulations without introducing excessive artificial dissipation.

\subsection*{Our contribution}

In this work, we investigate whether combining USBP operators with flux vector splittings can be applied to nodal DG methods with similar success. 
To this end, we first demonstrate the existence of USBP operators on arbitrary grid points and provide a straightforward procedure for their construction. 
While previous works have developed diagonal-norm FD-USBP operators, our discussion encompasses a broader class of USBP operators.
Importantly, the proposed construction procedure applies to arbitrary grid points, not limited to equidistant spacing.
This generalization enables the development of novel USBP operators on LGL points that are well-suited for nodal DG methods.
In contrast to, for instance, \cite{ortleb2023stability}, we not only use a global upwind formulation of DG methods but also develop local USBP operators for nodal DG-type methods. 
These operators achieve a given order of accuracy with fewer degrees of freedom (nodes) than existing FD-USBP operators \cite{mattsson2017diagonal,mattsson2018compatible}.
We then examine the robustness properties of the resulting DG-USBP methods for challenging examples of the compressible Euler equations, such as Kelvin-Helmholtz instabilities.
Similar to high-order FD-USBP schemes, we find that combining flux vector splitting techniques with DG-USBP operators does not lead to excessive artificial dissipation. 
Furthermore, we demonstrate local linear/energy stability for the DG-USBP semi-discretizations in a setting where existing high-order entropy-stable flux differencing DGSEM produce unstable semi-discretizations. 
At the same time, we observe that USBP operators offer less improvement in robustness for high-order DG methods than for FD schemes for multi-dimensional systems. 
Specifically, we note in our numerical simulations that the increase in robustness becomes less significant as more LGL points are used per element and coordinate direction. 
We suspect that this reduced improvement can be attributed to USBP methods adding dissipation only to unresolved modes, and FD schemes typically have a larger number of unresolved modes than nodal DG methods. 
Still, DG-USBP methods can serve as a viable alternative approach, as it may be more straightforward to apply in complex problems where a robust flux splitting, such as Lax-Friedrichs, is readily available while conducting an entropy analysis proves too cumbersome.

\subsection*{Outline}

In \Cref{sec:USBP}, we present all necessary background on SBP and USBP operators.
\Cref{sec:existence} is devoted to delineating the conditions for the existence of USBP operators.
\Cref{sec:construction} outlines a systematic approach for constructing USBP operators.
In \Cref{sec:application}, we use the previously developed DG-USBP operators to formulate DG-USBP baseline schemes.
\Cref{sec:tests} is reserved for various numerical experiments.
Finally, \Cref{sec:summary} offers concluding remarks and explores potential avenues for future research.

\section{Preliminaries on SBP and USBP operators}
\label{sec:USBP}

We start by revisiting SBP and USBP operators.
For simplicity, we focus on one-dimensional operators on a compact interval.

\subsection{Notation}
\label{sub:notation}

Consider $\Omega = [x_{\min}, x_{\max}]$ as the computational spatial domain for solving a given time-dependent partial differential equation (PDE).
Our focus is on SBP operators, which apply to a generic element $\Omega_{\text{ref}} = [x_L, x_R]$.
In pseudo-spectral or global FD schemes, $\Omega_{\text{ref}} = \Omega$.
However, in DG and multi-block FD methods, $\Omega$ is divided into multiple smaller non-overlapping elements $\Omega_j$ which collectively cover $\Omega$ (i.e., $\bigcup_{j} \Omega_j = \Omega$).
In this case, any $\Omega_j$ can serve as $\Omega_{\text{ref}}$.
Let $\mathbf{x} = [x_1,\dots,x_N]^T$ represent a vector of grid points on the reference element $\Omega_{\rm ref}$, where $x_L \leq x_1 < \dots < x_N \leq x_R$.
For a continuously differentiable function $f$ on $\Omega_{\rm ref}$, we denote
\begin{equation}
\mathbf{f} = [ f(x_1), \dots, f(x_N) ]^T, \quad
\mathbf{f'} = [ f'(x_1), \dots, f'(x_N) ]^T,
\end{equation}
as the nodal values and first derivative of $f$ at the grid points $\mathbf{x}$.
Additionally, $\mathcal{P}_d$ denotes the space of polynomials of degree up to $d$.

\subsection{SBP operators}
\label{sub:SBP}

We are now positioned to define SBP operators.
See the reviews \cite{svard2014review,fernandez2014review,chen2020review} and references therein for more details.

\begin{definition}[SBP operators]\label{def:SBP}
	The operator $D = P^{-1}( Q + B/2 )$ approximating $\partial_x$ is a \emph{degree $d$ SBP operator} on $\Omega_{\text{ref}} = [x_L,x_R]$ if
	\begin{enumerate}
		\item[(i)]
		$D \mathbf{f} = \mathbf{f\,'}$ for all $f \in \mathcal{P}_d$;

		\item[(ii)]
		$P$ is a symmetric and positive definite (SPD) matrix;

		\item[(iii)]
		The boundary matrix $B$ satisfies $\mathbf{f}^T B \mathbf{g} = fg \big|_{x_L}^{x_R}$ for all $f,g \in \mathcal{P}_{d}$;

		\item[(iv)]
		$Q + Q^T = 0$ ($Q$ is anti-symmetric).

	\end{enumerate}
\end{definition}

Relation (i) ensures that $D$ accurately approximates the continuous derivative operator $\partial_x$ by requiring $D$ to be exact for polynomials up to degree $d$.
Condition (ii) guarantees that $P$ induces a proper discrete inner product and norm, which are given by $\scp{\mathbf{f}}{\mathbf{g}} = \mathbf{f}^T P \mathbf{g}$ and $\| \mathbf{f} \|_P^2 = \mathbf{f}^T P \mathbf{f}$, respectively.
Relation (iv) encodes the SBP property, which allows us to mimic integration-by-parts (IBP) on a discrete level.
Recall that IBP reads
\begin{equation}\label{eq:IBP}
 	\int_{x_L}^{x_R} f (\partial_x g) \intd x + \int_{x_L}^{x_R} (\partial_x f) g \intd x
		= fg \big|_{x_L}^{x_R}.
\end{equation}
The discrete version of \cref{eq:IBP}, which follows from (iv) in \cref{def:SBP}, is
\begin{equation}\label{eq:SBP}
	\mathbf{f}^T P D \mathbf{g} + \mathbf{f}^T D^T P^T \mathbf{g}
		= \mathbf{f}^T B \mathbf{g}.
\end{equation}
Finally, relation (iii) in \cref{def:SBP} ensures that the right-hand side of \cref{eq:SBP} accurately approximates the one of \cref{eq:IBP} by requiring the boundary operator $B$ to be exact for polynomials up to degree $d$ as well.

\subsection{USBP operators}
\label{sub:USBP}

Next, we introduce USBP operators, which originate from the FD setting \cite{mattsson2017diagonal} and are a subclass of dual-pair SBP operators \cite{dovgilovich2015high}.

\begin{definition}[USBP operators]\label{def:USBP}
	The operators $D_+ = P^{-1}( Q_+ + B/2 )$ and $D_- = P^{-1}( Q_- + B/2 )$ approximating $\partial_x$ are \emph{degree $d$ USBP operators} on $\Omega_{\text{ref}} = [x_L,x_R]$ if
	\begin{enumerate}
		\item[(i)]
		$D_+ \mathbf{f} = D_- \mathbf{f} = \mathbf{f\,'}$ for all $f \in \mathcal{P}_d$;

		\item[(ii)]
		The norm matrix $P$ is an SPD matrix;

		\item[(iii)]
		The \emph{boundary matrix} $B$ satisfies $\mathbf{f}^T B \mathbf{g} = fg \big|_{x_L}^{x_R}$ for all $f,g \in \mathcal{P}_{\tau}$ with $\tau \geq d$;

		\item[(iv)]
		$Q_+ + Q_-^T = 0$;

		\item[(v)]
		$Q_+ + Q_+^T = S$, where the \emph{dissipation matrix} $S$ is symmetric and negative semi-definite.

	\end{enumerate}
\end{definition}

The motivation for conditions (i), (ii), and (iii) in \cref{def:USBP} is the same as in \cref{def:SBP} of SBP operators.
However, the discrete version of IBP \cref{eq:IBP} that follows from (iv) in \cref{def:USBP} is
\begin{equation}\label{eq:USBP}
	\mathbf{f}^T P D_+ \mathbf{g} + \mathbf{f}^T D_-^T P^T \mathbf{g}
		= \mathbf{f}^T B \mathbf{g}.
\end{equation}
At the same time, (v) introduces accurate artificial dissipation into the USBP discretization.
This can be seen as follows (also see \cite{mattsson2017diagonal,linders2020properties,linders2022eigenvalue}):
Using (iv) and (v), we observe that
\begin{equation}\label{eq:relation_D_pm}
	D_+ = D_- + P^{-1}S,
\end{equation}
and it follows that $D_+ \mathbf{f} = D_- \mathbf{f}$ if and only if $S \mathbf{f} = \mathbf{0}$.
This means that $S$ adds artificial dissipation that acts only on unresolved modes, $\mathbf{f} \in \R^N$ for which $f \not\in \mathcal{P}_d$.
The artificial dissipation included in USBP operators only acting on unresolved modes distinguishes them from combining SBP operators with traditional artificial dissipation terms based on second (or higher-order even) derivatives \cite{mattsson2004stable,ranocha2018stability}.
\cref{lem:connection} connects SBP and USBP operators.

\begin{lemma}[USBP operators induce SBP operators]\label{lem:connection}
	If $D_{\pm} = P^{-1}( Q_{\pm} + B/2 )$ are degree $d$ USBP operators, then
	\begin{equation}
		D := (D_+ + D_-)/2 = P^{-1}( Q + B/2 )
		\quad \text{with} \quad
		Q = (Q_+ + Q_-)/2
	\end{equation}
	is a degree $d$ SBP operator.
	Moreover, $D_+ - D_- = P^{-1} S$.
\end{lemma}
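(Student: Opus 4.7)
The plan is to verify the four defining properties of an SBP operator from \cref{def:SBP} one by one for $D = (D_+ + D_-)/2$, using the corresponding USBP properties from \cref{def:USBP}, and then to compute $D_+ - D_-$ directly.

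First I would handle conditions (i)--(iii), which are essentially inherited for free. Accuracy (i) is linear: since $D_+ \mathbf{f} = D_- \mathbf{f} = \mathbf{f\,'}$ for every $f \in \mathcal{P}_d$, averaging gives $D\mathbf{f} = \mathbf{f\,'}$. The norm matrix $P$ is the same as in the USBP definition, so (ii) holds unchanged. For (iii), the boundary matrix $B$ is also unchanged, and since the USBP definition requires $B$ to be exact on $\mathcal{P}_\tau$ with $\tau \geq d$, it is in particular exact on $\mathcal{P}_d$, as required by \cref{def:SBP}.

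The only nontrivial step is verifying the anti-symmetry condition (iv) for $Q = (Q_+ + Q_-)/2$. From USBP property (iv), $Q_+ + Q_-^T = 0$; transposing gives $Q_+^T + Q_- = 0$. Adding these two relations,
\begin{equation}
  Q + Q^T = \tfrac{1}{2}\bigl( Q_+ + Q_- + Q_+^T + Q_-^T \bigr) = \tfrac{1}{2}\bigl( (Q_+ + Q_-^T) + (Q_+^T + Q_-) \bigr) = 0,
\end{equation}
so $Q$ is anti-symmetric. Combined with the previous observations, this establishes that $D = P^{-1}(Q + B/2)$ is a degree $d$ SBP operator.

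Finally, the identity $D_+ - D_- = P^{-1}S$ follows by a short computation: $D_+ - D_- = P^{-1}(Q_+ - Q_-)$, and substituting $Q_- = -Q_+^T$ from USBP property (iv) yields $Q_+ - Q_- = Q_+ + Q_+^T = S$ by property (v). I do not foresee any real obstacle here; the only thing to be a little careful about is the bookkeeping between the two equivalent forms of USBP property (iv) (i.e., $Q_+ + Q_-^T = 0$ versus $Q_- + Q_+^T = 0$), which is what makes the anti-symmetry of the average work out cleanly.
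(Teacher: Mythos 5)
Your proof is correct. Note that the paper itself does not supply an argument for this lemma---it simply defers to the original reference \cite{mattsson2017diagonal}---so your direct verification fills in exactly what is omitted: conditions (i)--(iii) are inherited trivially, the anti-symmetry of $Q$ follows from adding $Q_+ + Q_-^T = 0$ to its transpose, and $D_+ - D_- = P^{-1}(Q_+ - Q_-) = P^{-1}(Q_+ + Q_+^T) = P^{-1}S$ via properties (iv) and (v). All steps check out.
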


\begin{proof}
	\cref{lem:connection} was demonstrated in \cite{mattsson2017diagonal} and we thus omit its proof.
\end{proof}

We will need \cref{lem:connection} to prove \cref{thm:existence} below, which characterizes the existence of USBP operators.
\section{Characterizing the existence of USBP operators}
\label{sec:existence}

We now characterize the existence of USBP operators as in \cref{def:USBP} for arbitrary grid points.
We subsequently leverage this analysis in \Cref{sec:construction} to construct novel USBP operators.

\begin{theorem}\label{thm:existence}
	Let $\mathbf{x} = [x_1,\dots,x_N]^T$ be a grid on $\Omega_{\text{ref}} = [x_L,x_R]$ and let $S$ be a symmetric and negative semi-definite dissipation matrix.
	\begin{itemize}
		\item[(a)]
		If there exists a degree $d$ SBP operator $D = P^{-1}( Q + B/2 )$ and $S$ satisfies $S \mathbf{f} = \mathbf{0}$ for all $f \in \mathcal{P}_d$, then $D_{\pm} = P^{-1}( Q_{\pm} + B/2 )$ with $Q_{\pm} = Q \pm S/2$ are degree $d$ USBP operators with $Q_+ + Q_+^T = S$.

		\item[(b)]
		If there exist degree $d$ USBP operators $D_{\pm} = P^{-1}( Q_{\pm} + B/2 )$ with $Q_+ + Q_+^T = S$, then $D = (D_+ + D_-)/2$ is a degree $d$ SBP operator and $S$ satisfies $S \mathbf{f} = \mathbf{0}$ for all $f \in \mathcal{P}_d$.

	\end{itemize}
\end{theorem}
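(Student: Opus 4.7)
The plan is to verify each clause of Definition 2.3 (USBP operators) directly for part (a), and then to invoke Lemma 2.4 together with the relation $D_+ - D_- = P^{-1}S$ for part (b). The key algebraic observation in both directions is that $Q_\pm = Q \pm S/2$ decomposes $Q_\pm$ into a symmetric part (contributing $S$) and the antisymmetric part $Q$; everything else follows from this decomposition.

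For part (a), I would check the five USBP conditions in order. Conditions (ii) and (iii) are inherited verbatim from the SBP operator $D$, since $P$ and $B$ are unchanged. For condition (i), I would compute
\begin{equation*}
    D_\pm \mathbf{f} = D \mathbf{f} \pm \tfrac{1}{2} P^{-1} S \mathbf{f} = \mathbf{f}' \pm \mathbf{0} = \mathbf{f}'
\end{equation*}
for $f \in \mathcal{P}_d$, using that $D$ is exact of degree $d$ and the hypothesis $S \mathbf{f} = \mathbf{0}$. Condition (iv) reduces to
\begin{equation*}
    Q_+ + Q_-^T = (Q + S/2) + (Q - S/2)^T = (Q + Q^T) + (S - S^T)/2 = 0,
\end{equation*}
using antisymmetry of $Q$ and symmetry of $S$. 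For condition (v), one has $Q_+ + Q_+^T = (Q+Q^T) + S = S$, which is symmetric and negative semi-definite by assumption.

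For part (b), the fact that $D := (D_+ + D_-)/2$ is a degree $d$ SBP operator is precisely the content of Lemma 2.4, so no new work is needed. To show $S\mathbf{f} = \mathbf{0}$ on $\mathcal{P}_d$, I would apply the relation $D_+ - D_- = P^{-1} S$ from Lemma 2.4 to a polynomial $f \in \mathcal{P}_d$: by USBP condition (i) both $D_+\mathbf{f}$ and $D_-\mathbf{f}$ equal $\mathbf{f}'$, so $P^{-1} S \mathbf{f} = \mathbf{0}$, and since $P$ is SPD hence invertible, $S\mathbf{f} = \mathbf{0}$.

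There is no real obstacle here, since once one recognizes the symmetric/antisymmetric splitting all the identities reduce to one-line verifications, and the only nontrivial content (that $D = (D_+ + D_-)/2$ is an SBP operator) is already supplied by Lemma 2.4. The only thing to be careful about is bookkeeping of the boundary matrix $B$ and the exactness degree $\tau \geq d$ in condition (iii), which is preserved unchanged between the SBP and USBP forms and therefore needs no modification in either direction.
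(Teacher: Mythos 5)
Your proposal is correct and follows essentially the same route as the paper's proof in the appendix: part (a) by directly verifying conditions (i), (iv), (v) of the USBP definition using the symmetry of $S$ and antisymmetry of $Q$, and part (b) by invoking the connection lemma together with the relation $D_+ - D_- = P^{-1}S$ and the invertibility of $P$. No discrepancies to report.
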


In a nutshell, \cref{thm:existence} tells us that there exist degree $d$ USBP operators $D_{\pm}$ on a given grid if and only if there exists a degree $d$ SBP operator $D$ on the same grid and a dissipation matrix $S$ with $S \mathbf{f} = \mathbf{0}$ for all $f \in \mathcal{P}_d$.
The proof of \cref{thm:existence} is provided in \cref{app:proof_existence}.

While the dissipation matrix $S$ should not add dissipation to resolved modes, i.e., $S \mathbf{f} = \mathbf{0}$ for all $f \in \mathcal{P}_d$, it should do so for \emph{unresolved} modes.
The latter means that
\begin{equation}\label{eq:dissip_cond}
	\mathbf{f}^T S \mathbf{f} < 0 \quad \forall f \not\in \mathcal{P}_d.
\end{equation}
We follow up with \cref{lem:S_char}, which shows that a desired dissipation matrix $S$ with $S \mathbf{f} = \mathbf{0}$ if $f \in \mathcal{P}_d$ and $\mathbf{f}^T S \mathbf{f} < 0$ otherwise always exists.

\begin{lemma}\label{lem:S_char}
	Let $d \in \N$, $\mathbf{x}$ be a grid with $N \geq d+1$ distinct points.
	There always exists a symmetric and negative semi-definite matrix $S$ with $S \mathbf{f} = \mathbf{0}$ if $f \in \mathcal{P}_d$ and $\mathbf{f}^T S \mathbf{f} < 0$ otherwise.
	Moreover, $S = 0$ if $N = d+1$ and $S \neq 0$ if $N > d+1$.
\end{lemma}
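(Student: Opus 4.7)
The plan is to construct $S$ explicitly via an orthogonal projection onto the complement of the polynomial subspace.

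First, I would set $V_d := \{\mathbf{f} \in \mathbb{R}^N : f \in \mathcal{P}_d\}$, the image of the evaluation map $\mathcal{P}_d \to \mathbb{R}^N$, $f \mapsto \mathbf{f}$. Since the $N \geq d+1$ nodes $x_1,\dots,x_N$ are distinct, a standard Vandermonde argument shows this evaluation map is injective, so $\dim V_d = d+1$. The condition $S\mathbf{f} = \mathbf{0}$ for $f \in \mathcal{P}_d$ is therefore equivalent to $V_d \subseteq \ker S$, and $\mathbf{f}^T S \mathbf{f} < 0$ for $f \notin \mathcal{P}_d$ is equivalent to negative definiteness on $\mathbb{R}^N \setminus V_d$, i.e., $\ker S = V_d$ and $S$ negative definite on the complement.

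Next I would split into two cases. If $N = d+1$, then $V_d = \mathbb{R}^N$, so there are no vectors $\mathbf{f}$ with $f \notin \mathcal{P}_d$; the zero matrix $S = 0$ trivially satisfies every condition, and conversely any $S$ annihilating all of $\mathbb{R}^N$ must vanish. If $N > d+1$, let $V_d^\perp$ denote the orthogonal complement of $V_d$ with respect to the standard Euclidean inner product, which has dimension $N - d - 1 \geq 1$. Denote by $\Pi$ the orthogonal projector onto $V_d^\perp$, and define
\begin{equation}
	S := -\Pi.
\end{equation}
Then $S$ is symmetric since $\Pi$ is an orthogonal projector; for any $\mathbf{f} \in \mathbb{R}^N$,
\begin{equation}
	\mathbf{f}^T S \mathbf{f} = -\mathbf{f}^T \Pi \mathbf{f} = -\|\Pi \mathbf{f}\|^2 \leq 0,
\end{equation}
so $S$ is negative semi-definite; and $S\mathbf{f} = \mathbf{0}$ iff $\Pi \mathbf{f} = \mathbf{0}$ iff $\mathbf{f} \in V_d$, while for $\mathbf{f} \notin V_d$ one has $\Pi\mathbf{f} \neq \mathbf{0}$, hence $\mathbf{f}^T S \mathbf{f} < 0$. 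Finally, since $V_d^\perp \neq \{\mathbf{0}\}$ when $N > d+1$, we have $\Pi \neq 0$ and therefore $S \neq 0$.

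There is no significant obstacle here; the lemma is essentially a linear-algebra observation, and the only point requiring care is verifying that $\dim V_d = d+1$ (to guarantee $V_d^\perp$ is nontrivial exactly when $N > d+1$), which follows from the injectivity of polynomial evaluation at distinct nodes.
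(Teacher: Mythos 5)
Your proof is correct, and it reaches the same destination as the paper's by a slightly cleaner route. The paper constructs $S$ through a spectral decomposition $S = V \Lambda V^{-1}$, placing the nodal vectors of a polynomial basis as eigenvectors with eigenvalue zero and assigning arbitrary negative eigenvalues $\lambda_{d+2}, \dots, \lambda_N < 0$ to the remaining (unresolved) directions. Your choice $S = -\Pi$, with $\Pi$ the orthogonal projector onto $V_d^\perp$, is exactly the special case of that construction in which all the free eigenvalues equal $-1$ and the eigenbasis is taken orthonormal. What your version buys is economy and rigor: the identity $\mathbf{f}^T S \mathbf{f} = -\|\Pi \mathbf{f}\|^2$ makes negative semi-definiteness and the characterization of the kernel immediate, and it sidesteps a subtlety in the paper's argument, namely that the expansion $\mathbf{f}^T S \mathbf{f} = \sum_k \lambda_k \gamma_k^2 \mathbf{v_k}^T \mathbf{v_k}$ silently drops cross terms and so really requires the eigenvectors to be orthogonal (the paper only enforces linear independence in the proof and defers orthogonalization to its later implementation section). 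What the paper's more general construction buys is the family of operators actually used downstream: the free negative eigenvalues become tunable dissipation parameters $\lambda_N$ in the numerical experiments, so the extra generality is not wasted there even though it is unnecessary for the bare existence claim. Your observation that $\dim V_d = d+1$ via injectivity of evaluation at distinct nodes is the right justification for $V_d^\perp$ being nontrivial exactly when $N > d+1$, which the paper leaves implicit.
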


\begin{proof}
	If $N = d+1$, then any vector $\mathbf{f} \in \R^N$ corresponds to the nodal values of a function $f \in \mathcal{P}_d$ on the grid $\mathbf{x}$.
	There are no unresolved modes in this case, and $S = 0$ is the desired matrix.

	Now assume that $N > d+1$.
	Since the matrix $S$ is real and symmetric, it can be decomposed as
	\begin{equation}\label{eq:S_char_proof1}
		S = V \Lambda V^{-1},
	\end{equation}
	where $\Lambda = \diag(\lambda_1,\dots,\lambda_N)$ contains the eigenvalues of $S$ and the columns of $V = [\mathbf{v_1},\dots,\mathbf{v_N}]$ are linearly independent eigenvectors of $S$; see \cite[Theorem 8.1.1]{golub2012matrix}.
	Let $\{f_k\}_{k=1}^{d+1}$ be a basis of $\mathcal{P}_d$, e.g., monomials or Legendre polynomials up to degree $d$.
	Then, $S \mathbf{f} = \mathbf{0}$ if $f \in \mathcal{P}_d$ is equivalent to $S \mathbf{f_k}= \mathbf{0}$ for $k=1,\dots,d+1$.
	At the same time, the latter condition means that $\mathbf{f_1},\dots,\mathbf{f_{d+1}}$ are eigenvectors of $S$ with the corresponding eigenvalue being zero.
	For simplicity, we position these as the first $d+1$ eigenvalues and eigenvectors in \cref{eq:S_char_proof1}, i.e.,
	\begin{equation}
		\lambda_k = 0, \quad \mathbf{v_k} = \mathbf{f_k}, \quad k=1,\dots,d+1.
	\end{equation}
	Then, by construction, the matrix $S$ in \cref{eq:S_char_proof1} is symmetric and satisfies $S \mathbf{f} = \mathbf{0}$ if $f \in \mathcal{P}_d$.
	Furthermore, since $N > d+1$, we still have $N-d-1>0$ free eigenvalues that we can use to ensure that $\mathbf{f}^T S \mathbf{f} < 0$ if $f \not\in \mathcal{P}_d$.
	To this end, the remaining eigenvalues must be negative, i.e., $\lambda_{d+2},\dots,\lambda_N < 0$.
	Finally, we can set $\mathbf{v_{d+2}},\dots,\mathbf{v_{N}}$ as arbitrary vectors that are linearly independent among themselves and to the first $d+1$ eigenvectors.
	Then, the resulting matrix $S$ in \cref{eq:S_char_proof1} is symmetric and negative semi-definite while satisfying $S \mathbf{f} = \mathbf{0}$ if $f \in \mathcal{P}_d$.
	It remains to show that $\mathbf{f}^T S \mathbf{f} < 0$ if $f \not\in \mathcal{P}_d$.
	To this end, suppose that $f \not\in \mathcal{P}_d$.
	In this case, the corresponding vector of nodal values $\mathbf{f}$ can be expressed as $\mathbf{f} = \gamma_{1} \mathbf{v_{1}} + \dots + \gamma_{N} \mathbf{v_{N}}$ with $\gamma_k \neq 0$ for at least one $k > d+1$.
	(At least one of the last $N-d-1$ eigenvectors contributes to representing $\mathbf{f}$.)
	Then, we see that
	\begin{equation}
		\mathbf{f}^T S \mathbf{f}
			= \underbrace{\sum_{k=1}^{d+1} \overbrace{\lambda_k}^{= 0} \gamma_k^2 \mathbf{v_k}^T \mathbf{v_k}}_{= 0}
			+ \underbrace{\sum_{k=d+2}^{N} \overbrace{\lambda_k}^{< 0} \gamma_k^2 \mathbf{v_k}^T \mathbf{v_k}}_{< 0}
			< 0,
	\end{equation}
	which completes the proof.
\end{proof}

Many modern collocation-type DG methods are based on interpolatory ($N = d+1$) SBP operators. 
See \cite{gassner2013skew,gassner2016split,klose2020assessing,manzanero2020entropy2, chen2020review,parsani2021high,yan2023entropy} and references therein. 
However, \cref{lem:S_char} indicates that there \emph{cannot} be interpolatory USBP operators with a non-trivial dissipation matrix $S$. Conversely, \cref{lem:S_char} also suggests that if $N > d+1$ (even with just an oversampling of one point), the construction of such USBP operators becomes viable.
\section{Constructing USBP operators}
\label{sec:construction}

We now provide a simple procedure to construct degree $d$ USBP operators $D_{\pm} = P^{-1}( Q_{\pm} + B/2 )$.
While previous works, such as \cite{mattsson2017diagonal}, have developed diagonal-norm FD-USBP operators, our method encompasses a broader class of USBP operators. 
Importantly, it applies to arbitrary grid points, not limited to equidistant spacing.
This allows us to derive novel USBP operators on LGL points suited for nodal DG methods.

\subsection{Proposed procedure}
\label{sub:construction_procedure}

The idea behind our construction procedure is to start from a degree $d$ SBP operator $D = P^{-1}(Q + B/2)$ and a dissipation matrix $S$ with $S \mathbf{f} = \mathbf{0}$ if $f \in \mathcal{P}_d$ and $\mathbf{f}^T S \mathbf{f} < 0$ otherwise. 
Then part (a) of \cref{thm:existence} implies that $D_{\pm} = P^{-1}(Q_{\pm} + B/2)$ with $Q_{\pm} = Q \pm S/2$ are degree $d$ USBP operators. 
This observation motivates the following simple construction procedure: 
\begin{enumerate}

	\item[(P1)] 
	Construct a degree $d$ SBP operator $D = P^{-1}( Q + B/2 )$ on the grid $\mathbf{x} = [x_1,\dots,x_N]^T$ with $N > d+1$. 
	
	\item[(P2)] 
	Find a dissipation matrix $S$ with $S \mathbf{f} = \mathbf{0}$ if $f \in \mathcal{P}_d$ and $\mathbf{f}^T S \mathbf{f} < 0$ otherwise. 
	
	\item[(P3)] 
	Choose $Q_{\pm} = Q \, \pm \, S/2$ and get degree $d$ USBP operators $D_{\pm} = P^{-1}( Q_{\pm} + B/2 )$ with $Q_+ + Q_+^T = S$ on the grid $\mathbf{x}$. 
	
\end{enumerate} 
In (P3), equivalently, we get degree $d$ USBP operators as $D_{\pm} = D \pm P^{-1} S$. 
Note that we choose $N > d+1$ in (P2) because \cref{lem:S_char} states that we can find a dissipation matrix $S \neq 0$ if and only if $N > d+1$. 

The first step in our construction procedure, (P1), is to find a degree $d$ SBP operator on a grid $\mathbf{x} = [x_1,\dots,x_N]^T$ with $N > d+1$.
Several works have discussed the construction of SBP operators. 
See \cite{hicken2013summation,linders2018order, glaubitz2022summation,glaubitz2024optimization} or the reviews \cite{svard2014review,fernandez2014review} and references therein. 
Therefore, we omit such a discussion and focus instead on the construction of $S$ in step (P2), which is addressed in \Cref{sub:constrution_S}, along with a robust implementation strategy in \Cref{sub:DOPs}.
Furthermore, we exemplify the above procedure in \Cref{sub:example}.

\subsection{Constructing a dissipation matrix}
\label{sub:constrution_S}

The second step of our construction procedure, (P2), is to find a dissipation matrix $S$ with $S \mathbf{f} = \mathbf{0}$ if $f \in \mathcal{P}_d$ and $\mathbf{f}^T S \mathbf{f} < 0$ otherwise.
To this end, we can follow the proof of \cref{lem:S_char}.
Let us briefly recall the most important steps and comment on their implementation for completeness.
Since $S$ is real and symmetric, we can decompose it as
\begin{equation}\label{eq:constr_S_diag}
	S = V \Lambda V^{-1},
\end{equation}
where $\Lambda = \diag(\lambda_1,\dots,\lambda_N)$ contains the eigenvalues of $S$ and the columns of $V = [\mathbf{v_1},\dots,\mathbf{v_N}]$ are linearly independent eigenvectors of $S$.
(See \cite[Theorem 8.1.1]{golub2012matrix}.)
Let $\{ f_k \}_{k=1}^{d+1}$ be a basis of $\mathcal{P}_d$.
We can ensure $S \mathbf{f} = \mathbf{0}$ if $f \in \mathcal{P}_d$ (the dissipation matrix does not act on resolved modes) by using the nodal value vectors of the basis functions as the first $d+1$ eigenvectors with the corresponding eigenvalues being zero, i.e.,
\begin{equation}
	\mathbf{v_k} = \mathbf{f_k}, \quad \lambda_k = 0, \quad k=1,\dots,d+1.
\end{equation}
These are linearly independent since $\mathcal{P}_d$ is a Chebyshev system.
Now suppose that $N > d+1$.
Then, we still have $N-d-1 > 0$ free eigenvalues and eigenvectors.
These have to be chosen such that (i) the eigenvectors $\mathbf{v_{d+2}},\dots,\mathbf{v_N}$ are linearly independent among themselves and to the first $d+1$ eigenvectors and (ii) one has $\mathbf{f}^T S \mathbf{f} < 0$ if $f \not\in \mathcal{P}_d$.
We can ensure these properties by choosing the eigenvalues $\lambda_{d+2},\dots,\lambda_{N}$ as arbitrary \emph{negative} numbers, e.g., $\lambda_{d+2},\dots,\lambda_{N}=-1$, and $\mathbf{v_{d+2}},\dots,\mathbf{v_N}$ as the corresponding eigenvectors satisfying $S \mathbf{v_j} = \lambda_{j} \mathbf{v_j}$ for $j=d+2,\dots,N$.
Then, $S$ is a desired dissipation matrix, and we can formally find it by computing the matrix product on the right-hand side of \cref{eq:constr_S_diag}.
That said, computing the matrix product in \cref{eq:constr_S_diag} is generally not recommended since it involves matrix inversion.
We discuss an approach to bypass this problem in \Cref{sub:DOPs}.

\subsection{Robust implementation using discrete orthogonal polynomials}
\label{sub:DOPs}

We present a simple and robust method for computing the dissipation matrix $S$ in \cref{eq:constr_S_diag} using discrete orthogonal polynomials (DOPs), see \cite{gautschi2004orthogonal}. 
Suppose we are given a grid $\mathbf{x} = [x_1,\dots,x_N]^T$.
A basis $\{ f_k \}_{k=1}^{N}$ of $\mathcal{P}_N$ consists of \emph{DOPs} if the $f_k$'s are orthogonal with respect to the discrete inner product $[f,g]_{\mathbf{x}} = \sum_{n=1}^N f(x_n) g(x_n)$, i.e.,
\begin{equation}\label{eq:DOP}
	\sum_{n=1}^N f_j(x_n) f_k(x_n) = \delta_{j,k}, \quad
	j,k = 1,\dots,N,
\end{equation}
where $\| f_k \|^2_{\mathbf{x}} = [f_k,f_k]_{\mathbf{x}}$ is the induced norm.
Note that \cref{eq:DOP} is equivalent to $\scp{\mathbf{f_j}}{\mathbf{f_k}} = \delta_{j,k}$, where $\scp{\mathbf{f}}{\mathbf{g}} = \mathbf{f}^T \mathbf{g}$ and $\| \mathbf{f} \|^2 = \mathbf{f}^T \mathbf{f}$ are the usual Euclidean inner product and norm for $\mathbf{f}, \mathbf{g} \in \R^N$, respectively.
Hence, if we choose $\mathbf{v_k} = \mathbf{f_k}$ for the eigenvectors in \cref{eq:constr_S_diag}, where $\{f_k\}_{k=1}^N$ is a DOP basis of $\mathcal{P}_N$, then $V = [\mathbf{v_1},\dots,\mathbf{v_N}]$ is orthogonal ($V^{-1} = V^T$) and \cref{eq:constr_S_diag} becomes
\begin{equation}\label{eq:constr_S_diag2}
	S = V \Lambda V^T.
\end{equation}
The advantage of \cref{eq:constr_S_diag2} over \cref{eq:constr_S_diag} is that we no longer need to invert $V$.
To summarize the above discussion, we can efficiently and robustly construct a dissipation matrix $S$ as follows:
\begin{enumerate}

	\item[(S1)]
	Find a DOP basis $\{ f_k \}_{k=1}^N$ of $\mathcal{P}_N$ with respect to the grid $\mathbf{x} = [x_1,\dots,x_N]^T$.

	\item[(S2)]
	Choose the columns of $V$ as the vectors of nodal values of the DOP basis, i.e., $\mathbf{v_k} = \mathbf{f_k}$ for $k=1,\dots,N$.

	\item[(S3)]
	Set the first $d+1$ diagonal entries of $\Lambda$ in \cref{eq:constr_S_diag2} to zero, i.e., $\lambda_k = 0$ for $k=1,\dots,d+1$.\footnote{We assume that the first $d+1$ DOPs form a basis of $\mathcal{P}_d$. This ensures that the dissipation operator $S$ does not act on the resolved modes corresponding to polynomials of degree up to $d$.}

	\item[(S4)]
	Choose the remaining eigenvalues $\lambda_{d+2},\dots,\lambda_N < 0$ and compute $S$ using \cref{eq:constr_S_diag2}.

\end{enumerate}

A few remarks are in order. 

\begin{remark}
	Since the dissipation matrix $S$ is symmetric, the eigenvectors corresponding to \emph{different} eigenvalues are automatically orthogonal. 
	However, in our case, we encounter multiple linear independent eigenvectors associated with the same eigenvalue (for instance, $\lambda = 0$). 
	Consequently, these are not guaranteed to be orthogonal; therefore, we rely on the additional orthogonalization step described above. 
\end{remark}

\begin{remark}[Finding DOP bases]
	The existing literature offers explicit formulas of DOPs for different point distributions $\mathbf{x}$, including Chebyshev--Gauss--Lobatto \cite{eisinberg2007discrete} and equidistant points \cite{wilson1970hahn}.
	Suppose an explicit formula is not readily available. 
	In that case, we can start from a basis of Legendre polynomials, orthogonal with respect to the continuous $L^2$-inner product, and transform them into a DOP basis using the modified Gram--Schmidt process.
	See \cite{gautschi2004orthogonal,glaubitz2020stable} and references therein.
\end{remark}

\subsection{An example: USBP operators on LGL points}
\label{sub:example}

We exemplify the construction of USBP operators for a degree one ($d=1$) diagonal-norm USBP operator for three ($N=3$) LGL points on $\Omega_{\rm ref} = [-1,1]$.
This case allows us to represent exactly all involved matrices below.
Additional diagonal-norm USBP operators on up to six LGL nodes are provided in \cref{app:operators}. 
Moreover, \cref{app:examples} exemplifies the construction of USBP operators for Gauss--Legendre nodes (that do not include the boundary points) and a dense-norm USBP operator on equidistant points. 

Let us first address (P1):
To compute a degree one diagonal-norm SBP operator on three LGL points, we have two options:
The first option is to determine a diagonal-norm SBP operator with exactly degree one, i.e., the operator is exact for linear functions but not necessarily for quadratic ones.
We can find such an operator following the procedure described in \cite{hicken2013summation,linders2018order,glaubitz2022summation}.
Note that this operator is not unique but comes with one degree of freedom that can be used to optimize the SBP operator based on different criteria, such as the truncation error and spectral properties \cite{strand1994summation,svard2014review,fernandez2014review}.
The second option is to determine a degree two diagonal-norm SBP operator.
This operator is unique and readily available by evaluating the derivatives of the corresponding Lagrange basis functions at the LGL grid points \cite{gassner2013skew,carpenter2014entropy,carpenter2015entropy,chen2017entropy,cicchino2022nonlinearly,montoya2022unifying}.
We follow the second option since it is computationally more convenient.
The associated grid points, quadrature weights (diagonal entries of $P$), and SBP operator on $\Omega_{\rm ref} = [-1,1]$ are
\begin{equation}\label{eq:USBP_ex_D}
	\mathbf{x} =
	\begin{bmatrix}
		-1 \\ 0 \\ 1
	\end{bmatrix}, \quad
	\mathbf{p} = \frac{1}{3}
	\begin{bmatrix}
		1 \\ 4 \\ 1
	\end{bmatrix}, \quad
	D = \frac{1}{2}
	\begin{bmatrix}
		-3 & 4 & -1 \\
		-1 & 0 & 1 \\
		1 & -4 & 3
	\end{bmatrix}.
\end{equation}
The boundary matrix is $B = \diag(-1,0,1)$ since the boundary points are included in the grid.

We next address (P2), finding a dissipation matrix $S$ with $S \mathbf{f} = \mathbf{0}$ if $f \in \mathcal{P}_1$ and $\mathbf{f}^T S \mathbf{f} < 0$ otherwise.
To this end, we follow \Cref{sub:DOPs} and robustly construct $S$ using a DOP basis on the three LGL grid points.
We get these by starting from the first three Legendre polynomials, spanning $\mathcal{P}_3$, and then applying the Gram--Schmidt procedure.
The Vandermonde matrix of this DOP basis is
\begin{equation}
	V = \frac{1}{\sqrt{6}}
	\begin{bmatrix}
		\sqrt{2} & -\sqrt{3} & 1 \\
		\sqrt{2} & 0 & -2 \\
		\sqrt{2} & \sqrt{3} & 1
	\end{bmatrix}.
\end{equation}
Consequently, we get the dissipation matrix as $S = V \Lambda V^T$ with $\Lambda = \diag(\lambda_1,\lambda_2,\lambda_3)$.
To ensure that the desired USBP operator has degree one ($d=1$), we choose the first two eigenvalues as zero, i.e., $\lambda_1=\lambda_2=0$.
Furthermore, we choose $\lambda_3 = -1$, which introduces artificial dissipation to the highest, unresolved, mode.\footnote{
Note that $\lambda_3 = -1$ is an arbitrary choice.
Any negative value yields an admissible dissipation matrix.
We investigate the influence of the choice of $\lambda_3$ in more detail in \Cref{sec:tests}.
}
The resulting dissipation matrix is
\begin{equation}
	S = \frac{1}{6}
	\begin{bmatrix}
		-1 & 2 & -1 \\
		2 & -4 & 2 \\
		-1 & 2 & -1
	\end{bmatrix}.
\end{equation}

Finally, (P3), we get degree one diagonal-norm USBP operators as $D_{\pm} = D \pm P^{-1}S/2$, yielding
\begin{equation}
	D_+ = \frac{1}{8}
	\begin{bmatrix}
		-14 & 20 & -6 \\
		-3 & -2 & 5 \\
		2 & -12 & 10
	\end{bmatrix}, \quad
	D_- = \frac{1}{8}
	\begin{bmatrix}
		-10 & 12 & -2 \\
		-5 & 2 & 3 \\
		6 & -20 & 14
	\end{bmatrix}.
\end{equation}
The norm and boundary matrix of the degree one USBP operators are the same as those of the degree two SBP operator.
\section{Application of USBP operators to nodal DG methods} 
\label{sec:application} 

We are now positioned to combine the above-developed DG-USBP operators with flux vector splitting approaches to construct new DG-USBP baseline schemes for nonlinear conservation laws.

\subsection{The flux vector splitting approach}
\label{sub:splitting}

We briefly review the classical flux vector splitting approach. 
See \cite{steger1979flux,vanleer1982flux,buning1982solution,hanel1987accuracy,liou1991high,coirier1991numerical} as well as \cite[Chapter 8]{toro2013riemann} and references therein for more details. 
Consider the generic one-dimensional hyperbolic conservation law 
\begin{equation}\label{eq:consLaw} 
	\partial_t u + \partial_x f( u ) = 0, \quad x \in \Omega = (x_{\rm min},x_{\rm max}),
\end{equation} 
with conserved variable $u = u(t,x)$ and flux $f = f(u)$.  
Henceforth, we assume that \cref{eq:consLaw} is equipped with appropriate initial and boundary conditions. 
In flux vector splitting approaches, the flux $f$ is split into two parts, 
\begin{equation}\label{eq:flux_vector_splitting}
	f(u) = f_-(u) + f_+(u),
\end{equation}
where the eigenvalues $\lambda^{\pm}_i(u)$ of the Jacobian $\nabla_u f_{\pm}(u)$ satisfy $\lambda_i^-(u) \leq 0$ and $\lambda_i^+(u) \geq 0$ for all $i$, respectively. 
Applying \cref{eq:flux_vector_splitting} to \cref{eq:consLaw}, we get 
\begin{equation}\label{eq:consLaw_splitt} 
	\partial_t u + \partial_x f_-( u ) + \partial_x f_+( u ) = 0, \quad x \in \Omega = [x_{\rm min},x_{\rm max}].
\end{equation}
We can now semi-discretize \cref{eq:consLaw_splitt} using `global' USBP operators $D_{\pm}$ on $\Omega = [x_{\rm min},x_{\rm max}]$, resulting in 
\begin{equation}\label{eq:consLaw_splitt_discr}
	\frac{\d}{\d t} \mathbf{u} + D_+ \mathbf{f_-} + D_- \mathbf{f_+} = 0,
\end{equation}
where $\mathbf{u}$ and $\mathbf{f_{\pm}}$ are the nodal values of the numerical solution and the two flux components at the global grid points.
Note that we have ignored boundary conditions for the moment. 

\begin{remark}
  	The mismatch between the indices $\pm$ of the upwind operators and the fluxes is intentional. 
  	This choice is made to maintain backward compatibility, preserving the established conventions of both flux vector splitting methods and upwind SBP operators for historical consistency. 
\end{remark}

\subsection{Application to nodal collocated DG methods}
\label{sub:DG}

In DG-type methods, the computational domain $\Omega = [x_{\rm min},x_{\rm max}]$ is partitioned into $J$ disjoint elements $\Omega_j$ with $\bigcup_{j=1}^J \Omega_j = \Omega$. 
Each element $\Omega_j$ is mapped diffeomorphically onto a reference element, say $\Omega_{\rm ref} = [-1,1]$, in which all computations are carried out. 
This has the advantage of only computing USBP operators $D_{\pm}$ once for the reference element rather than for each element separately. 
Applying this DG-type approach to \cref{eq:consLaw_splitt}, we get the DG-USBP semi-discretization 
\begin{equation}\label{eq:SE_discr1}
	\frac{\d}{\d t} \mathbf{u}^j + D_+ \mathbf{f}_-^j + D_- \mathbf{f}_+^j = \mathbf{0}, \quad j=1,\dots,J,
\end{equation}
where $\mathbf{u}^j$ and $\mathbf{f}_{\pm}^j$ are the nodal values of the numerical solution and the two flux vector splitting components on the $j$th element.
We allow for the numerical solution to be discontinuous at element interfaces. 
Hence, at each element interface, we have two values. 
We distinguish between values from inside $\Omega_j$, e.g., $u^j_L$ and $u^j_R$, and values from the neighboring elements, $\Omega_{j-1}$ and $\Omega_{j+1}$, e.g., $u^{j-1}_R$ and $u^{j+1}_L$. 
Thereby, the subindices ``L" and ``R" denote that the value of the numerical solution at the left and right element boundary is taken, respectively. 
From conservation and stability (upwinding) considerations---and to allow for information to be shared between neighboring elements---we introduce a weak coupling between elements in \cref{eq:SE_discr1} using simultaneous approximation terms (SATs), resulting in the semi-discretization 
\begin{equation}\label{eq:USBP_SAT}
	\frac{\d}{\d t} \mathbf{u}^j + D_+ \mathbf{f}_-^j + D_- \mathbf{f}_+^j = \mathbf{SAT}^j, \quad j=1,\dots,J.
\end{equation} 
Please see \cite{svard2014review,fernandez2014review,carpenter2014entropy,del2018simultaneous,chen2020review,manzanero2020entropy2} and references therein for details on SATs. 
Here, the SAT in the $j$-th element is 
\begin{equation}\label{eq:SAT_numFlux}
	\mathbf{SAT}^j = 
		- P^{-1} \mathbf{e}_R \left[ \fnum( u_R^{j}, u_L^{j+1} ) - f^j_R \right] 
		+ P^{-1} \mathbf{e}_L \left[ \fnum( u_R^{j-1}, u_L^{j} ) - f_L^j \right], 
\end{equation} 
where $u_{L/R}^j = \mathbf{e}_{L/R}^T \mathbf{u}^j$ and $f_{L/R}^j = \mathbf{e}_{L/R}^T \mathbf{f}^j$ abbreviate the values of the numerical solution and the corresponding values of the flux at the left/right boundary of the element $\Omega_j$. 
For simplicity, we suppose that the grid includes the boundary points, i.e., $x_1 = -1$ and $x_N = 1$ in the reference element. 
Otherwise, one has to incorporate certain projection operators, which introduce additional technical details that we omit here.
If the boundary points are included, $\mathbf{e}_L = [1,0,\dots,0]^T$ and $\mathbf{e}_R = [0,\dots,0,1]^T$, $u_{L/R}^{j} = u_{1/N}^j$, and $f_{L/R}^{j} = f_{1/N}^j$. 
Furthermore, $\fnum( u_R^{j}, u_L^{j+1} )$ and $\fnum( u_R^{j-1}, u_L^{j} )$ are two-point numerical flux functions at the right and left boundary of the element $\Omega_j$, respectively. 
There are many existing numerical fluxes that we can use. 
Alternatively, as described in \cite{ranocha2023high}, we can use the flux vector splitting to design numerical fluxes. 
In this approach, we use the same splitting for $f$ and $\fnum$, yielding 
\begin{equation}\label{eq:spitting_numFlux}
\begin{aligned} 
	\fnum( u_R^{j}, u_L^{j+1} ) - f^j_R 
		& = \left[ f_+( u_R^{j} ) + f_-( u_L^{j+1} ) \right] 
			- \left[ f_+( u_R^{j} ) + f_-( u_R^{j} ) \right] 
		= f_-( u_L^{j+1} ) - f_-( u_R^{j} ), \\ 
	\fnum( u_R^{j-1}, u_L^{j} ) - f_L^j 
		& = \left[ f_+( u_R^{j-1} ) + f_-( u_L^{j} ) \right] 
			- \left[ f_+( u_L^{j} ) + f_-( u_L^{j} ) \right] 
		= f_+( u_R^{j-1} ) - f_+( u_L^{j} ). 
\end{aligned}	
\end{equation} 
Substituting \cref{eq:spitting_numFlux} into the USBP-SAT semi-discretization \cref{eq:USBP_SAT} results in 
\begin{equation}\label{eq:USBP_SAT2}
	\frac{\d}{\d t} \mathbf{u}^j + D_+ \mathbf{f}_-^j + D_- \mathbf{f}_+^j
		= - P^{-1} \mathbf{e}_R \left[ f_-( u_L^{j+1} ) - f_-( u_R^{j} ) \right] 
		+ P^{-1} \mathbf{e}_L \left[ f_+( u_R^{j-1} ) - f_+( u_L^{j} ) \right]
\end{equation} 
for $j=1,\dots,N$. 
In our implementation, we opted for the scheme given by \cref{eq:USBP_SAT2}. 
This choice has the advantage of preventing the need to resolve a Riemann problem in terms of numerical fluxes, which streamlines the implementation and improves efficiency. 
Finally, one evolves the numerical solution in time by solving the semi-discrete equation \cref{eq:USBP_SAT2} with an appropriate ODE solver. 
\section{Numerical tests}
\label{sec:tests}

We now examine the robustness properties of the DG-USBP method described in \Cref{sec:application} based on the new DG-USBP operators we constructed in \Cref{sec:construction}. 
Specifically, we focus on diagonal-norm USBP operators on LGL nodes, which are predominantly used in spectral element methods since they enable splitting techniques \cite{nordstrom2006conservative,gassner2016split,offner2019error} and the extension to variable coefficients including curvilinear coordinates \cite{chan2019efficient,svard2004coordinate,nordstrom2017conservation,parsani2016entropy}. 
In many cases, we compare the DG-USBP method with an entropy-stable flux differencing DGSEM on the same LGL points.
For brevity, we will henceforth refer to the entropy-stable flux-differencing DGSEM simply as ``DGSEM.'' 
The numerical experiments are conducted using the Julia programming language \cite{bezanson2017julia}.
We employ Runge-Kutta methods from the OrdinaryDiffEq.jl package for time integration  \cite{rackauckas2017differentialequations}.
Spatial discretization is handled using the Trixi.jl framework \cite{ranocha2022adaptive} in combination with the SummationByPartsOperators.jl package \cite{ranocha2021sbp}.
Visualization of the results is achieved through Plots.jl \cite{christ2023plots} and ParaView \cite{ahrens2005paraview}.
All source code required to reproduce the numerical experiments is available online in the reproducibility repository \cite{glaubitz2024UpwindRepro}.

\subsection{Convergence for the linear advection equation}
\label{sub:convergence_advection}

We initiate our study with a convergence analysis of the proposed USBP operators for the one-dimensional scalar linear advection equation
\begin{equation}\label{eq:linear_adv_tests}
\begin{aligned}
  	\partial_t u(t, x) + \partial_x u(t, x)
  		& = 0,
		&& t \in (0, 5), \ x \in (-1, 1),
		\\
  	u(0, x)
		& = \sin(\pi x),
		&& x \in [-1, 1],
\end{aligned}
\end{equation}
equipped with periodic boundary conditions.
The analysis utilizes the classical Lax--Friedrichs flux vector splitting with $\lambda_{\rm max} = 1$, i.e., $f^-(u) = 0$ and $f^+(u) = u$.
For time integration, we employ the nine-stage, fourth-order accurate Runge--Kutta (RK) method of \cite{ranocha2021optimized} with error-based step size control and a tolerance of $10^{-12}$ to integrate the semi-discretizations in time.
We measure the discrete $L^2$ error using the quadrature rule associated with the norm matrix $P$.

\begin{table}[tb]
	\centering
	\begin{adjustbox}{width=0.95\textwidth}
  	\begin{tabular}{rrr}
  		\multicolumn{3}{c}{$\lambda_3 = -10^{-3}$} \\
		\toprule
  		$J$ & $L^2$ error & EOC \\
		\midrule
    		2 & 7.41e-01 & \\
    		4 & 1.22e-01 & 2.61 \\
    		8 & 1.06e-02 & 3.53 \\
   		16 & 9.75e-04 & 3.44 \\
   		32 & 1.07e-04 & 3.18 \\
   		64 & 1.29e-05 & 3.06 \\
  		128 & 1.60e-06 & 3.01 \\
		\bottomrule
	\end{tabular}
	\hspace{.05cm}
  	\begin{tabular}{rrr}
  		\multicolumn{3}{c}{$\lambda_3 = -10^{-2}$} \\
		\toprule
  		$J$ & $L^2$ error & EOC \\
		\midrule
    		2 & 7.38e-01 & \\
    		4 & 1.21e-01 & 2.61 \\
    		8 & 1.05e-02 & 3.52 \\
   		16 & 9.74e-04 & 3.43 \\
   		32 & 1.08e-04 & 3.17 \\
   		64 & 1.34e-05 & 3.01 \\
  		128 & 1.86e-06 & 2.85 \\
		\bottomrule
	\end{tabular}
	\hspace{.05cm}
  	\begin{tabular}{rrr}
  		\multicolumn{3}{c}{$\lambda_3 = -10^{-1}$} \\
		\toprule
  		$J$ & $L^2$ error & EOC \\
		\midrule
    		2 & 7.47e-01 & \\
    		4 & 2.09e-01 & 1.83 \\
    		8 & 3.37e-02 & 2.63 \\
   		16 & 5.32e-03 & 2.66 \\
   		32 & 1.02e-03 & 2.38 \\
   		64 & 2.31e-04 & 2.14 \\
  		128 & 5.61e-05 & 2.04 \\
		\bottomrule
	\end{tabular}
	\hspace{.05cm}
  	\begin{tabular}{rrr}
  		\multicolumn{3}{c}{$\lambda_3 = -1$} \\
		\toprule
  		$J$ & $L^2$ error & EOC \\
		\midrule
    		2 & 8.14e-01 & \\
    		4 & 3.52e-01 & 1.21 \\
    		8 & 6.61e-02 & 2.41 \\
   		16 & 1.08e-02 & 2.61 \\
   		32 & 2.09e-03 & 2.37 \\
   		64 & 4.75e-04 & 2.14 \\
  		128 & 1.15e-04 & 2.04 \\
		\bottomrule
	\end{tabular}
	\end{adjustbox}
	\\
	\vspace{.3cm}
	\begin{adjustbox}{width=0.95\textwidth}
  	\begin{tabular}{rrr}
  		\multicolumn{3}{c}{$\lambda_4 = -10^{-3}$} \\
		\toprule
  		$J$ & $L^2$ error & EOC \\
		\midrule
    		2 & 1.06e-01 & \\
    		4 & 4.66e-03 & 4.51 \\
    		8 & 2.73e-04 & 4.09 \\
   		16 & 1.70e-05 & 4.00 \\
   		32 & 1.07e-06 & 4.00 \\
  	 	64 & 6.66e-08 & 4.00 \\
  		128 & 4.18e-09 & 4.00 \\
		\bottomrule
	\end{tabular}
	\hspace{.05cm}
  	\begin{tabular}{rrr}
  		\multicolumn{3}{c}{$\lambda_4 = -10^{-2}$} \\
		\toprule
  		$J$ & $L^2$ error & EOC \\
		\midrule
    		2 & 1.06e-01 &  \\
    		4 & 4.64e-03 & 4.51 \\
    		8 & 2.72e-04 & 4.09 \\
   		16 & 1.70e-05 & 4.00 \\
   		32 & 1.08e-06 & 3.98 \\
   		64 & 7.16e-08 & 3.92 \\
  		128 & 5.34e-09 & 3.74 \\
		\bottomrule
	\end{tabular}
	\hspace{.05cm}
  	\begin{tabular}{rrr}
  		\multicolumn{3}{c}{$\lambda_4 = -10^{-1}$} \\
		\toprule
  		$J$ & $L^2$ error & EOC \\
		\midrule
    		2 & 1.02e-01 & \\
    		4 & 4.63e-03 & 4.46 \\
    		8 & 2.92e-04 & 3.99 \\
   		16 & 2.30e-05 & 3.67 \\
   		32 & 2.27e-06 & 3.34 \\
   		64 & 2.62e-07 & 3.12 \\
  		128 & 3.21e-08 & 3.03 \\
		\bottomrule
	\end{tabular}
	\hspace{.05cm}
  	\begin{tabular}{rrr}
  		\multicolumn{3}{c}{$\lambda_4 = -1$} \\
		\toprule
  		$J$ & $L^2$ error & EOC \\
		\midrule
    		2 & 1.02e-01 & \\
    		4 & 4.63e-03 & 4.46 \\
    		8 & 2.92e-04 & 3.99 \\
   		16 & 2.30e-05 & 3.67 \\
   		32 & 2.27e-06 & 3.34 \\
   		64 & 2.62e-07 & 3.12 \\
  		128 & 3.21e-08 & 3.03 \\
		\bottomrule
	\end{tabular}
	\end{adjustbox}
	\\
	\vspace{.3cm}
	\begin{adjustbox}{width=0.95\textwidth}
  	\begin{tabular}{rrr}
  		\multicolumn{3}{c}{$\lambda_5 = -10^{-3}$} \\
		\toprule
  		$J$ & $L^2$ error & EOC \\
		\midrule
    		2 & 6.01e-03 & \\
    		4 & 3.04e-04 & 4.31 \\
    		8 & 9.75e-06 & 4.96 \\
   		16 & 3.07e-07 & 4.99 \\
   		32 & 9.62e-09 & 5.00 \\
   		64 & 3.01e-10 & 5.00 \\
  		128 & 9.47e-12 & 4.99 \\
		\bottomrule
	\end{tabular}
	\hspace{.05cm}
  	\begin{tabular}{rrr}
  		\multicolumn{3}{c}{$\lambda_5 = -10^{-2}$} \\
		\toprule
  		$J$ & $L^2$ error & EOC \\
		\midrule
    		2 & 6.11e-03 & \\
    		4 & 3.03e-04 & 4.34 \\
    		8 & 9.72e-06 & 4.96 \\
   		16 & 3.08e-07 & 4.98 \\
   		32 & 9.88e-09 & 4.96 \\
   		64 & 3.36e-10 & 4.88 \\
  		128 & 1.34e-11 & 4.65 \\
		\bottomrule
	\end{tabular}
	\hspace{.05cm}
  	\begin{tabular}{rrr}
  		\multicolumn{3}{c}{$\lambda_5 = -10^{-1}$} \\
		\toprule
  		$J$ & $L^2$ error & EOC \\
		\midrule
    		2 & 7.17e-03 & \\
    		4 & 3.05e-04 & 4.56 \\
    		8 & 1.10e-05 & 4.79 \\
   		16 & 4.71e-07 & 4.54 \\
   		32 & 2.48e-08 & 4.25 \\
   		64 & 1.47e-09 & 4.08 \\
  		128 & 9.05e-11 & 4.02 \\
		\bottomrule
	\end{tabular}
	\hspace{.05cm}
  	\begin{tabular}{rrr}
  		\multicolumn{3}{c}{$\lambda_5 = -1$} \\
		\toprule
  		$J$ & $L^2$ error & EOC \\
		\midrule
    		2 & 1.62e-02 & \\
    		4 & 6.42e-04 & 4.66 \\
    		8 & 3.89e-05 & 4.05 \\
   		16 & 2.42e-06 & 4.01 \\
   		32 & 1.51e-07 & 4.00 \\
   		64 & 9.43e-09 & 4.00 \\
  		128 & 5.89e-10 & 4.00 \\
		\bottomrule
	\end{tabular}
	\end{adjustbox}
	\caption{
		Convergence results using DG-USBP discretizations of the linear advection equation with Lax--Friedrichs splitting and $J$ elements.
		We use degree one (first row), two (second row), and three (third row) USBP operators on three, fours, and five LGL nodes per element, respectively, with different values for the artificial dissipation parameters $\lambda_3$, $\lambda_4$, and $\lambda_5$.
	}
  	\label{tab:convergence_linear}
\end{table}

\cref{tab:convergence_linear} presents the findings of our convergence study, detailing the discrete $L^2$ error using the quadrature rule associated with the norm matrix $P$ and the resulting experimental orders of convergence (EOC) for degree one ($d=1$), degree two ($d=2$), and three ($d=3$) USBP operators on three ($N=3$), four ($N=4$), and five ($N=5$) LGL nodes, respectively.
These USBP operators are characterized by a non-trivial dissipation matrix $S = V \Lambda V^T$, where $V$ denotes an orthogonal Vandermonde matrix, and $\Lambda = \diag(\lambda_1,\dots,\lambda_N)$ encapsulates the dissipation parameters, which regulate the dissipation added to unresolved modes.
In the instance of the degree one ($d=1$) USBP operator on three ($N=3$) LGL nodes, the parameters $\lambda_1$ and $\lambda_2$ are set to zero, ensuring that the operator is exact for polynomials up to degree one.
Conversely, $\lambda_3 \leq 0$ introduces artificial dissipation to unresolved modes.
The impact of different $\lambda_3$ values on the accuracy and convergence behavior of the USBP scheme is delineated in the top row of \cref{tab:convergence_linear}.
Setting $\lambda_3$ to zero recovers the central degree three SBP operator on three LGL points, yielding fourth-order convergence.
A gradual decrease in $\lambda_3$ results in a reduction of the EOC from three to two.
Moreover, as we will show, lower values of $\lambda_3$ diminish the scheme's accuracy but enhance its robustness.
Regarding the degree two ($d=2$) USBP operator on four ($N=4$) LGL nodes, the parameters $\lambda_1$ through $\lambda_3$ are zeroed, rendering the operator exact for polynomials up to degree two.
This time, the parameter $\lambda_4 \leq 0$ adds artificial dissipation to unresolved modes.
The second row of \cref{tab:convergence_linear} examines the effects of varying $\lambda_4$ on the USBP scheme's accuracy and convergence.
Decreasing $\lambda_4$ reduces the EOC from four to three due to adding artificial dissipation to the scheme. 
For the degree three ($d=3$) USBP operator on five ($N=5$) LGL nodes, the parameters $\lambda_1$ through $\lambda_4$ are zeroed, rendering the operator exact for polynomials up to degree three.
The third row of \cref{tab:convergence_linear} reports the effects of varying $\lambda_5$ on the USBP scheme's accuracy and convergence.
Decreasing $\lambda_5$ reduces the EOC from five to four.

\subsubsection*{Discussion of the results}

The above findings indicate that combining flux vector splitting techniques with DG-USBP operators does not lead to excessive artificial dissipation, similar to high-order FD-USBP schemes.

\subsection{Convergence for the compressible Euler equations}
\label{sub:convergence_Euler}

We continue our study with a convergence analysis for the one-dimensional compressible Euler equations
\begin{equation}\label{eq:euler}
  \partial_t \begin{pmatrix} \rho \\ \rho v \\ \rho e \end{pmatrix}
  + \partial_x \begin{pmatrix} \rho v \\ \rho v^2 + p \\ (\rho e + p) v \end{pmatrix}
  = 0
\end{equation}
of an ideal gas with density $\rho$, velocity $v$, total energy density $\rho e$, and pressure $p = (\gamma - 1) \left( \rho e - \rho v^2 / 2 \right)$, where the ratio of specific heat is chosen as $\gamma = 1.4$.
We add a source term to manufacture the solution
\begin{equation}
  \rho(t, x) = h(t, x), \quad v(t, x) = 1, \quad \rho e(t, x) = h(t, x)^2,
\end{equation}
with $h(t, x) = 2 + 0.1 \sin\bigl( \pi (x - t) \bigr)$ for $t \in [0, 2]$ and $x \in [0, 2]$.
The time-integration is done as in \Cref{sub:convergence_advection, here with a tolerance of $10^{-13}$.}

\begin{table}[tb]
	\centering
	\begin{adjustbox}{width=0.95\textwidth}
	\begin{tabular}{rrr}
  		\multicolumn{3}{c}{DGSEM, Ranocha flux} \\
		\toprule
  		$J$ & $L^2$ error & EOC \\
		\midrule
    		2 & 2.58e-02 & \\
    		4 & \textbf{4.00e-03} & 2.69 \\
    		8 & \textbf{3.75e-04} & 3.42 \\
   		16 & 6.05e-05 & 2.63 \\
   		32 & 8.62e-06 & 2.81 \\
   		64 & 1.14e-06 & 2.92 \\
  		128 & 1.43e-07 & 3.00 \\
		\bottomrule
	\end{tabular}
	\hspace{.05cm}
	\begin{tabular}{rrr}
  		\multicolumn{3}{c}{DGSEM, Shima flux} \\
		\toprule
  		$J$ & $L^2$ error & EOC \\
		\midrule
    		2 & \textbf{2.51e-02} & \\
    		4 & 4.06e-03 & 2.63 \\
    		8 & 3.98e-04 & 3.35 \\
   		16 & \textbf{5.42e-05} & 2.88 \\
   		32 & \textbf{7.13e-06} & 2.93 \\
   		64 & \textbf{8.47e-07} & 3.07 \\
  		128 & \textbf{1.03e-07} & 3.04 \\
		\bottomrule
	\end{tabular}
	\hspace{.05cm}
  	\begin{tabular}{rrr}
  		\multicolumn{3}{c}{$\lambda_3 = -10^{-3}$} \\
		\toprule
  		$J$ & $L^2$ error & EOC \\
		\midrule
    		2 & 3.74e-02 & \\
    		4 & 5.89e-03 & 2.66 \\
    		8 & 6.38e-04 & 3.21 \\
   		16 & 6.90e-05 & 3.21 \\
   		32 & 8.24e-06 & 3.07 \\
   		64 & 1.07e-06 & 2.95 \\
  		128 & 1.51e-07 & 2.82 \\
		\bottomrule
	\end{tabular}
	\hspace{.05cm}
  	\begin{tabular}{rrr}
  		\multicolumn{3}{c}{$\lambda_3 = -1$} \\
		\toprule
  		$J$ & $L^2$ error & EOC \\
		\midrule
    		2 & 3.38e-02 & \\
    		4 & 9.10e-03 & 1.89 \\
    		8 & 1.64e-03 & 2.48 \\
   		16 & 3.31e-04 & 2.31 \\
   		32 & 7.68e-05 & 2.11 \\
   		64 & 1.88e-05 & 2.03 \\
  		128 & 4.68e-06 & 2.01 \\
		\bottomrule
	\end{tabular}
	\end{adjustbox}
	\\
	\vspace{.3cm}
	\begin{adjustbox}{width=0.95\textwidth}
	\begin{tabular}{rrr}
  		\multicolumn{3}{c}{DGSEM, Ranocha flux} \\
		\toprule
  		$J$ & $L^2$ error & EOC \\
		\midrule
    		2 & \textbf{3.71e-03} & \\
    		4 & \textbf{2.06e-04} & 4.17 \\
    		8 & 2.79e-05 & 2.89 \\
   		16 & 3.32e-06 & 3.07 \\
   		32 & 2.67e-07 & 3.64 \\
   		64 & 1.36e-08 & 4.29 \\
  		128 & 7.51e-10 & 4.18 \\
		\bottomrule
	\end{tabular}
	\hspace{.05cm}
	\begin{tabular}{rrr}
  		\multicolumn{3}{c}{DGSEM, Shima flux} \\
		\toprule
  		$J$ & $L^2$ error & EOC \\
		\midrule
    		2 & 3.83e-03 & \\
    		4 & 2.47e-04 & 3.95 \\
    		8 & \textbf{1.98e-05} & 3.64 \\
   		16 & \textbf{1.75e-06} & 3.50 \\
   		32 & \textbf{8.62e-08} & 4.34 \\
   		64 & \textbf{4.56e-09} & 4.24 \\
  		128 & \textbf{2.75e-10} & 4.05 \\
		\bottomrule
	\end{tabular}
	\hspace{.05cm}
  	\begin{tabular}{rrr}
  		\multicolumn{3}{c}{$\lambda_4 = -10^{-3}$} \\
		\toprule
  		$J$ & $L^2$ error & EOC \\
		\midrule
    		2 & 7.13e-03 & \\
    		4 & 3.53e-04 & 4.33 \\
    		8 & 3.33e-05 & 3.41 \\
   		16 & 4.49e-06 & 2.89 \\
   		32 & 2.12e-07 & 4.41 \\
   		64 & 9.04e-09 & 4.55 \\
  		128 & 5.17e-10 & 4.13 \\
		\bottomrule
	\end{tabular}
	\hspace{.05cm}
  	\begin{tabular}{rrr}
  		\multicolumn{3}{c}{$\lambda_4 = -1$} \\
		\toprule
  		$J$ & $L^2$ error & EOC \\
		\midrule
    		2 & 4.88e-03 & \\
    		4 & 5.94e-04 & 3.04 \\
    		8 & 6.81e-05 & 3.12 \\
   		16 & 8.75e-06 & 2.96 \\
   		32 & 1.17e-06 & 2.91 \\
   		64 & 1.48e-07 & 2.98 \\
  		128 & 1.85e-08 & 3.00 \\
		\bottomrule
	\end{tabular}
	\end{adjustbox}
	\\
	\vspace{.3cm}
	\begin{adjustbox}{width=0.95\textwidth}
	\begin{tabular}{rrr}
  		\multicolumn{3}{c}{DGSEM, Ranocha flux} \\
		\toprule
  		$J$ & $L^2$ error & EOC \\
		\midrule
    		2 & \textbf{3.32e-04} & \\
    		4 & 1.87e-05 & 4.15 \\
    		8 & 1.73e-06 & 3.43 \\
   		16 & 1.46e-07 & 3.57 \\
   		32 & 6.16e-09 & 4.57 \\
   		64 & 1.25e-10 & 5.62 \\
  		128 & 3.51e-12 & 5.16 \\
		\bottomrule
	\end{tabular}
	\hspace{.05cm}
	\begin{tabular}{rrr}
  		\multicolumn{3}{c}{DGSEM, Shima flux} \\
		\toprule
  		$J$ & $L^2$ error & EOC \\
		\midrule
    		2 & 6.26e-04 & \\
    		4 & \textbf{1.80e-05} & 5.12 \\
    		8 & \textbf{7.43e-07} & 4.60 \\
   		16 & \textbf{3.61e-08} & 4.36 \\
   		32 & \textbf{8.69e-10} & 5.38 \\
   		64 & \textbf{2.12e-11} & 5.36 \\
  		128 & \textbf{9.15e-13} & 4.53 \\
		\bottomrule
	\end{tabular}
	\hspace{.05cm}
  	\begin{tabular}{rrr}
  		\multicolumn{3}{c}{$\lambda_5 = -10^{-3}$} \\
		\toprule
  		$J$ & $L^2$ error & EOC \\
		\midrule
    		2 & 7.79e-04 & \\
    		4 & 2.98e-05 & 4.71 \\
    		8 & 1.26e-06 & 4.57 \\
   		16 & 5.31e-08 & 4.56 \\
   		32 & 9.82e-10 & 5.76 \\
   		64 & 3.34e-11 & 4.88 \\
  		128 & 1.41e-12 & 4.57 \\
		\bottomrule
	\end{tabular}
	\hspace{.05cm}
  	\begin{tabular}{rrr}
  		\multicolumn{3}{c}{$\lambda_5 = -1$} \\
		\toprule
  		$J$ & $L^2$ error & EOC \\
		\midrule
    		2 & 1.21e-03 & \\
    		4 & 6.48e-05 & 4.22 \\
    		8 & 4.81e-06 & 3.75 \\
   		16 & 4.15e-07 & 3.53 \\
   		32 & 1.70e-08 & 4.61 \\
   		64 & 9.50e-10 & 4.16 \\
  		128 & 5.73e-11 & 4.05 \\
		\bottomrule
	\end{tabular}
	\end{adjustbox}
	\caption{
		Convergence results using the DGSEM and DG-USBP discretizations of the Euler equations for different numbers of elements $J$.
		For the DGSEM, we use polynomial degrees two (first row), three (second row), and four (third row) with the HLL surface flux, and the volume fluxes of Shima et al.\ and Ranocha.
		For the DG-USBP method, we use degree one (first row), two (second row), and three (third row) USBP operators with the van Leer--H\"anel splitting and different values for the artificial dissipation parameters $\lambda_3$, $\lambda_4$, and $\lambda_5$.
		For both methods, we use three (first row), four (second row), and five (third row) LGL nodes per element, respectively.
	}
  	\label{tab:convergence_Euler_vLH}
\end{table}

\cref{tab:convergence_Euler_vLH} summarizes the results of our convergence study for the van Leer--H\"anel flux vector splitting \cite{vanleer1982flux,hanel1987accuracy,liou1991high}.
We obtained similar results for the Steger--Warming flux vector splitting \cite{steger1979flux}, which are not reported here but can be reproduced using the openly available code repository \cite{glaubitz2024UpwindRepro}. 
We use degree one (first row), two (second row), and three (third row) USBP operators on three, four, and five LGL nodes per element, respectively, with different values for the artificial dissipation parameters $\lambda_3$, $\lambda_4$, and $\lambda_5$.
Similar to the convergence test for the linear advection equation in \Cref{sub:convergence_advection}, we observe that the EOC reduces by one as the respective dissipation parameter $\lambda_N$ is gradually decreased from $-10^{-3}$ to $-1$ for each operator.
Furthermore, we compare the performance of the DG-USBP method with a DGSEM employing the HLL surface flux \cite{harten1983upstream} in combination with the volume fluxes of Shima et al.\ \cite{shima2021preventing} and Ranocha \cite{ranocha2018comparison}.
To allow for a fair comparison, we use polynomial degrees two (first row), three (second row), and four (third row) on the same three (first row), four (second row), and five (third row) LGL nodes per element as the DG-USBP method. 
That is, we use the same set of points for both the DGSEM and the DG-USBP schemes.

\subsubsection*{Discussion of the results}

Based on the above results, we observe that the accuracies of the DGSEM and DG-USBP approaches are comparable, particularly for small values of the parameter $\lambda$, although the DGSEM consistently yields more accurate results.
We suspect that the reduction in accuracy for the DG-USBP method is due to the additional artificial dissipation introduced.
Therefore, while combining flux vector splitting techniques with DG-USBP operators does not necessarily lead to excessive artificial dissipation, the accuracy of the DG-USBP method remains lower than that of the DGSEM considered here.

\subsection{Spectral analysis}
\label{sub:spectral}

\begin{figure}[tb]
	\centering
	\begin{subfigure}[b]{0.45\textwidth}
		\includegraphics[width=\textwidth]{%
      		figures/spectra_N4_USBP_vs_DGSEM_16elements}
    		\caption{Degree two DG-USBP operators}
    		\label{fig:spectra_N4_USBP_vs_DGSEM_16elements}
  	\end{subfigure}%
	\begin{subfigure}[b]{0.45\textwidth}
		\includegraphics[width=\textwidth]{%
      		figures/spectra_N5_USBP_vs_DGSEM_16elements}
    		\caption{Degree three DG-USBP operators}
    		\label{fig:spectra_N5_USBP_vs_DGSEM_16elements}
  	\end{subfigure}%
  	\caption{
  	Spectra of the DG-USBP and the DGSEM semi-discretizations with $J=16$ elements.
	We use degree two ($d=2$, \cref{fig:spectra_N4_USBP_vs_DGSEM_16elements}) and three ($d=3$, \cref{fig:spectra_N5_USBP_vs_DGSEM_16elements}) DG-USBP operators on four ($N=4$) and five ($N=5$) LGL nodes, respectively, with different parameters $\lambda_4$ and $\lambda_5$.
	For the DGSEM method, we use traditional degree three and four DG-SBP operators on the same LGL nodes as the respective DG-USBP operators.
  	}
  	\label{fig:spectra_USBP_vs_DGSEM}
\end{figure}

Next, we compare the spectra of the DG-USBP to the DGSEM and FD-USBP semi-discretization for the linear advection equation \cref{eq:linear_adv_tests} with periodic boundary conditions.
\cref{fig:spectra_USBP_vs_DGSEM} illustrates the spectra of the DG-USBP semi-discretization for degree two ($d=2$) and three ($d=3$) DG-USBP operators on four ($N=4$) and five ($N=5$) LGL nodes, respectively, with $J=16$ elements and different parameters $\lambda_4$ and $\lambda_5$.
For $\lambda_4$ and $\lambda_5$ close to zero, the DG-USBP spectra are comparable to the DGSEM method (labeled ``DG-SBP") on the same LGL points.
At the same time, for smaller $\lambda_4$ and $\lambda_5$, the spectra suggest that the DG-USBP method is stiffer than the DGSEM method.

\subsubsection*{Discussion of the results}

We find that the DG-USBP method is stiffer than the DGSEM method on the same LGL points in all cases.
This could be explained by the DG-USBP method having more artificial dissipation than the DGSEM method.
A drawback of increasing stiffness when using DG-USBP operators is that explicit time-stepping may require smaller time steps, which can impact the method's overall performance.

\subsection{Local linear/energy stability}
\label{sub:local_syability}

We now demonstrate the local linear/energy stability properties of the proposed DG-USBP operators for the inviscid Burgers' equation
\begin{equation}\label{eq:Burgers}
	\partial_t u(t,x) + \partial_x \left[ u(t,x)^2/2 \right] = 0, \quad
	x \in (-1,1),
\end{equation}
with periodic boundary conditions.
To this end, we linearize \cref{eq:Burgers} locally around a baseflow $\tilde{u}(x)$, so that $u(t,x) = \tilde{u}(x) + v(t,x)$, which yields
\begin{equation}\label{eq:Burgers_linear}
	\partial_t v(t,x) + \partial_x \left[ \tilde{u}(x) v(t,x) \right] = 0, \quad
	x \in (-1,1).
\end{equation}
Observe that \cref{eq:Burgers_linear} is a linear advection equation with a spatially varying coefficient $\tilde{u}(x)$ that is solved for the perturbation component $v(t,x)$.
Notably, when the baseflow $\tilde{u}(x)$ is positive across $[-1,1]$, the operator $v \mapsto \partial_x \left[ \tilde{u} v \right]$ becomes skew-symmetric for a weighted $L^2$ inner product.
As a result, this operator exhibits a purely imaginary spectrum  \cite{manzanero2018insights,gassner2022stability}.
A semi-discretization should replicate this behavior, ensuring a linearization around a positive baseflow yields eigenvalues with non-positive real parts.
However, it was observed in \cite{gassner2022stability} that existing entropy-stable high-order flux differencing DGSEM semi-discretizations can produce a linearized discrete operator that has eigenvalues with a significant positive real part.
Now consider the full-upwind DG-USBP semi-discretization,
\begin{equation}\label{eq:Burgers_DGUSBP}
 	\frac{\d}{\d t} \mathbf{u}^{j} + D_- \mathbf{f}^{j} = \mathbf{SAT}^j, \quad j=1,\dots,J,
\end{equation}
where $J$ is the number of elements and $\mathbf{f}^j = \frac{1}{2} \mathbf{u}^j \odot \mathbf{u}^j$ with $\odot$ denoting the usual element-wise Hadamard product.
Our numerical investigations demonstrate that the full-upwind DG-USBP semi-discretization, as per \cref{eq:Burgers_DGUSBP}, results in a linearized discrete operator whose eigenvalues have non-positive real parts, by the analysis of \cite{ranocha2023high}.
To stress the method, we consider a completely under-resolved case by computing the Jacobian at a random non-negative state using automatic differentiation via ForwardDiff.jl \cite{revels2016forward}.
\cref{tab:linear_stability} reports the maximum real parts of the spectrum of the DG-USBP method for the linearized Burgers equation \cref{eq:Burgers_linear}.
This is done using degree one ($d=1$), two ($d=2$), and three ($d=3$) DG-USBP operators on three ($N=3$), four ($N=4$), and five ($N=5$) LGL nodes per element, respectively, varying the number of elements $J$, and different values for the artificial dissipation parameters $\lambda_{3}$, $\lambda_{4}$, and $\lambda_{5}$.
In all cases, the maximum real part of the eigenvalues is around the machine precision level, aligning with our desired outcome.

\begin{table}[tb]
	\centering
	\begin{adjustbox}{width=0.99\textwidth}
	\begin{tabular}{r | r r r r}
  		\multicolumn{4}{c}{Degree one operator} \\
		\toprule
		$J$\textbackslash$\lambda_3$ & $0$ & $-10^{-2}$ & $-1$ \\
		\midrule
    		2 &  -1.5e-18 &   1.2e-16 &   9.9e-17 \\
   		4 &  -2.7e-16 &  -6.2e-16 &   2.9e-17 \\
   		8 &   4.3e-16 &  -4.0e-16 &   6.5e-16 \\
  		16 &  -9.9e-16 &   2.3e-16 &  -3.6e-15 \\
  		32 &   7.1e-18 &   8.8e-15 &   1.1e-14 \\
		\bottomrule
	\end{tabular}
	\hspace{.05cm}
  	\begin{tabular}{r | r r r r}
  		\multicolumn{4}{c}{Degree two operator} \\
		\toprule
		$J$\textbackslash$\lambda_4$ & $0$ & $-10^{-2}$ & $-1$ \\
		\midrule
    		2 &   6.7e-16 &   4.9e-17 &  -3.3e-16 \\
   		4 &  -4.9e-16 &   6.0e-16 &   5.5e-17 \\
   		8 &  -5.8e-16 &   2.2e-15 &  -4.1e-17 \\
  		16 &   3.3e-17 &  -1.1e-15 &  -3.5e-16 \\
  		32 &   1.0e-14 &  -4.3e-14 &   3.9e-15 \\
		\bottomrule
	\end{tabular}
	\hspace{.05cm}
	\begin{tabular}{r | r r r r}
  		\multicolumn{4}{c}{Degree three operator} \\
		\toprule
		$J$\textbackslash$\lambda_5$ & $0$ & $-10^{-2}$ & $-1$ \\
		\midrule
    		2 &   5.8e-16 &  -8.4e-16 &   7.8e-17 \\
   		4 &   1.5e-17 &  -5.3e-16 &   4.7e-16 \\
   		8 &   1.0e-15 &   3.6e-16 &  -1.8e-16 \\
  		16 &  -7.0e-16 &  -7.4e-15 &  -1.7e-15 \\
  		32 &   2.0e-15 &  -3.5e-16 &  -3.2e-15 \\
		\bottomrule
	\end{tabular}
	\end{adjustbox}
	\caption{
		The maximum real parts of the spectrum of the DG-USBP method for the Burgers equation using a full-upwind semi-discretization.
		 We use degree one ($d=1$), two ($d=2$), and three ($d=3$) USBP operators on three ($N=3$), four ($N=4$), and five ($N=5$) LGL nodes per element, respectively, varying numbers of elements $J$, and different values for the artificial dissipation parameter $\lambda_3$, $\lambda_4$, and $\lambda_5$.
	}
  	\label{tab:linear_stability}
\end{table}

\subsubsection*{Discussion fo the results}

We find that DG-USBP methods yield local linear/energy stability.
This finding contrasts the behavior observed for existing entropy-stable high-order flux differencing DGSEM semi-discretizations in the same setting.
Parallel findings were obtained in \cite{ranocha2023high} for traditional FD-USBP operators, complemented by a theoretical analysis.

\subsection{Curvilinear meshes and free-stream preservation}
\label{sub:fsp_tests}

One particular strength of DG-type methods is their geometric flexibility and ability to create high-order approximations on curvilinear, unstructured meshes.
This geometric flexibility is retained with the DG-USBP operators developed in the present work, which we subsequently demonstrate in a two-dimensional setting.

\subsubsection*{The curvilinear DG-USBP semi-discretization}

We start by describing conservation laws in curvilinear coordinates and their DG-USBP semi-discretization.
See Kopriva~\cite{kopriva2006metric} for a complete discussion on curvilinear coordinates and transformations. 
Recall that a conservation law in two dimensions is of the form
\begin{equation}\label{eq:fsp_CL}
	\partial_t u + \partial_x f_1( u ) + \partial_y f_2( u ) = 0, \quad
	t \in (0,T), \ (x,y) \in \Omega \subset \R^2,
\end{equation}
with fluxes fluxes $f_1$, $f_2$, and conserved variable $u = u(t,x,y)$.
Suppose the domain $\Omega$ has been subdivided into $J$ non-overlapping quadrilateral elements, $\Omega_j$, $j=1,\dots,J$.
For ease of notation, we subsequently consider the conservation law \cref{eq:fsp_CL} on an individual element $\Omega_j$ and suppress the index $j$.
The element $\Omega_j$ in the physical coordinates $(x,y)$ is transformed into a reference element $\Omega_{\rm ref} = [-1,1]^2$ in the computational coordinates $(\xi,\eta)$ via the coordinate transformation
\begin{equation}
	x = X(\xi,\eta), \quad
	y = Y(\xi,\eta).
\end{equation}
Under this transformation, the conservation law in the physical coordinates \cref{eq:fsp_CL} becomes a conservation law in the reference coordinates:
\begin{equation}\label{eq:fsp_CL_trans}
	J \partial_t u + \partial_{\xi} \tilde{f}_1( u ) + \partial_{\eta} \tilde{f}_2( u ) = 0, \quad
	t \in (0,T), \ (\xi,\eta) \in \Omega_{\rm ref},
\end{equation}
with $u = u(t,\xi,\eta)$.
The contravariant fluxes are $\tilde{f}_1 = Y_{\eta} f_1 - X_{\eta} f_2$ and $\tilde{f}_2 = -Y_{\xi} f_1 + X_{\xi} f_2$, while the Jacobian is $J = X_{\xi} Y_{\eta}  - X_{\eta} Y_{\xi}$.
The partial derivatives $X_{\xi}$, $X_{\eta}$, $Y_{\xi}$, and $Y_{\eta}$, are called the \emph{metric terms}.
The metric terms in DG approximations are typically created from a transfinite interpolation with linear blending \cite{kopriva2006metric,gordon1973construction}.
The curvilinear DG-USBP semi-discretization of \cref{eq:fsp_CL_trans} has the form
\begin{equation}\label{eq:transformed_split}
	{J} \partial_t \mathbf{u} + D_- \tilde{\mathbf{f}}^{\,+}_1 + D_+ \tilde{\mathbf{f}}^{\,-}_1
	+ \tilde{\mathbf{f}}^{\,+}_2 D_-^T + \tilde{\mathbf{f}}^{\,-}_2 D_+^T
	= \widetilde{\mathbf{SAT}}^j.
\end{equation}
Note that multiplication from the left with $D_{\pm}$ approximates the derivative in the $\xi$-direction
and multiplication from the right with $D_{\pm}^T$ approximates the derivative in the $\eta$-direction.
Furthermore, we use a generic statement of the $\mathbf{SAT}$ in the normal direction on an interface in element $j$
\begin{equation}
	\widetilde{\mathbf{SAT}}^j
		= - P^{-1} \mathbf{e}_R \left( \fnum(\mathbf{u}^j_R, \mathbf{u}^{j+1}_L; \hat{\mathbf{n}}) - \tilde{\mathbf{f}}^{\,j}_R \right)
  		+ P^{-1} \mathbf{e}_L \left( \fnum(\mathbf{u}^{j-1}_R, \mathbf{u}^j_L; \hat{\mathbf{n}}) - \tilde{\mathbf{f}}^{\,j}_L \right),
\end{equation}
where $\hat{\mathbf{n}}$ is the normal direction on a particular element.
The tilde notation indicates that the physical fluxes are computed in the contravariant direction.

\subsubsection*{Free-stream preservation}

Notably, the metric terms satisfy the two metric identities
\begin{equation}\label{eq:fsp_metric_identities}
	\partial_{\xi} Y_{\eta} - \partial_{\eta} Y_{\xi} = 0, \quad
	\partial_{\xi} X_{\eta} - \partial_{\eta} X_{\xi} = 0,
\end{equation}
which are essential to ensure \emph{free-stream preservation (FSP)}; see \cite{visbal1999high,vinokur2002extension,kopriva2006metric}.
That is, given a constant flux in space, its divergence vanishes, and the constant solution of \cref{eq:fsp_CL} does not change in time.
As demonstrated in \cite{ranocha2023high}, the ability of existing FD-USBP methods to remain FSP on curvilinear meshes is delicate.
There is a subtle relationship between the boundary closure accuracy of an FD-SBP operator, the dependency of flux vector splitting on the metric terms, and the polynomial degree of curved boundaries.
The boundary closure of an FD-SBP operator relates to the highest polynomial degree the operator can differentiate exactly.
For instance, a 4-2 FD-SBP operator is fourth-order accurate in the interior with second-order boundary closures and can differentiate up to quadratic polynomials exactly.
In the following discussion, we use the term boundary closure in the DG context to refer to the highest polynomial degree a discrete derivative operator can differentiate exactly.

The curvilinear flux vector splittings $\tilde{\mathbf{f}}^{\pm}_1$ and $\tilde{\mathbf{f}}^{\pm}_2$ in \cref{eq:transformed_split} implicitly depend on the metric terms \cite{anderson1986comparison,ranocha2023high}.
The polynomial dependency of the curvilinear flux vector splitting on the metric terms must ``agree'' with the boundary order closure of a USBP operator.
Precisely, suppose the polynomial dependency of a mapped flux vector splitting on the metric terms has order $m$. In that case, the USBP operator must have a boundary closure that can differentiate polynomials up to degree $mN_{\textrm{geo}}$ exactly, where $N_{\textrm{geo}}$ is the polynomial order of the curved boundaries.
For example, the Lax--Friedrichs splitting is linear in the metric terms while the van Leer-H\"{a}nel splitting is quadratic in the metric terms.
This means that $m=1$ for the Lax--Friedrichs splitting and the curvilinear approximation remains FSP regardless of the boundary closure order.
In contrast, the van Leer-H\"{a}nel splitting has $m=2$ (quadratic dependency on the metric terms), which restricts the possible values of $N_{\textrm{geo}}$ combined with the boundary closure of the SBP operator.

\begin{remark}
To create DG-USBP operators for polynomials of degree $d$, we sacrifice one degree of accuracy on the boundary closure to construct the internal dissipation matrix $S$, i.e., we use $N=d+2$ LGL points.
A ``classic'' DG-SBP operator with $N=4$ LGL points has a boundary closure of the order $d=3$ and can differentiate up to cubic polynomials exactly.
In contrast, a DG-USBP operator for $N=4$ LGL points will be endowed with a boundary closure of degree $d=2$ and can only differentiate up to quadratic polynomials exactly.
\end{remark}

\subsubsection*{Computational results}

\begin{figure}[tb]
	\centering
  	\begin{subfigure}{0.495\textwidth}
  		\centering
    		\includegraphics[width=\textwidth]{%
			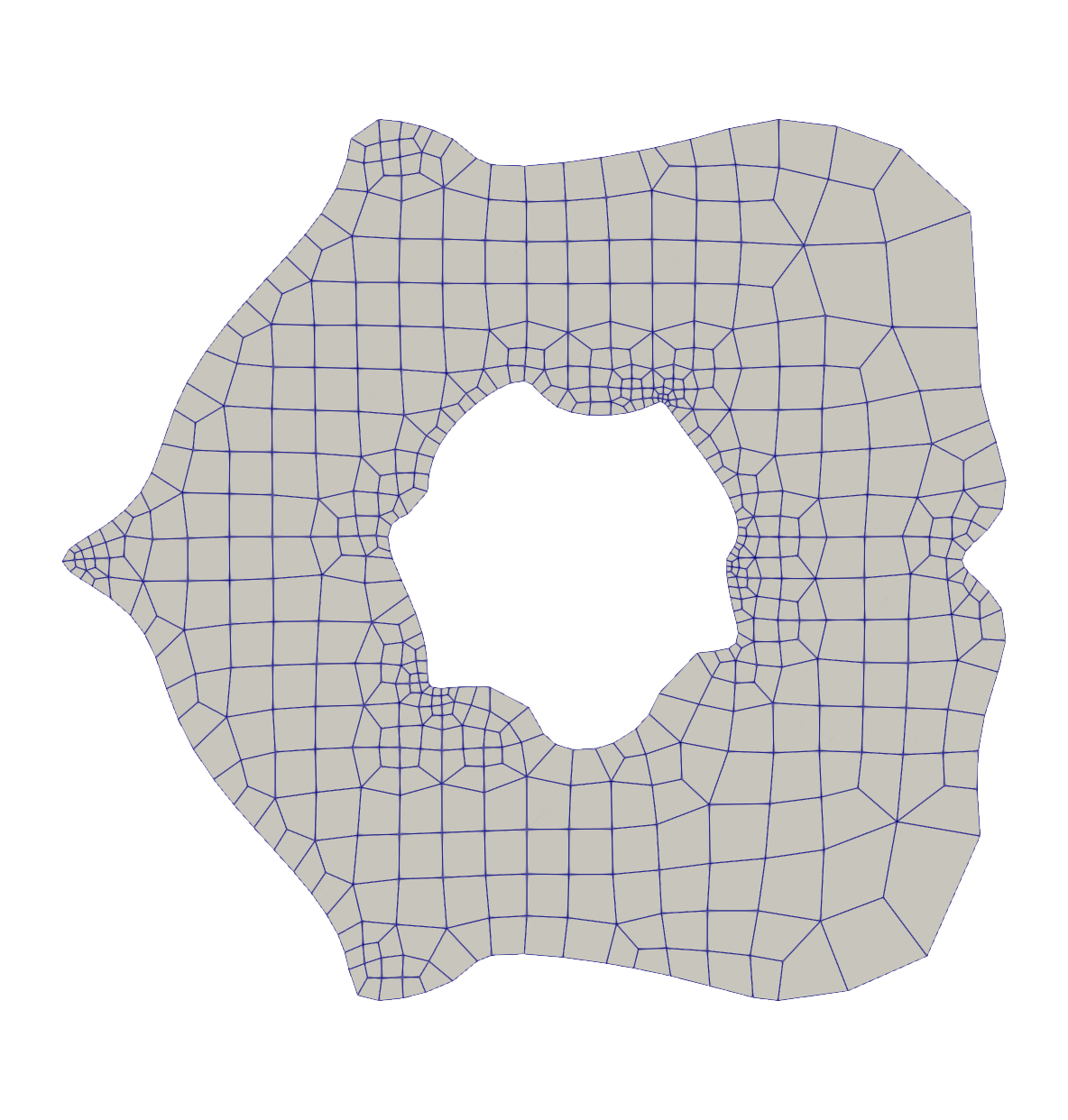}
    		\caption{Linear boundaries}
  	\end{subfigure}%
  	\begin{subfigure}{0.495\textwidth}
  		\centering
    		\includegraphics[width=\textwidth]{%
			figures//mesh04.png}
    		\caption{Quartic boundaries}
  	\end{subfigure}%
  	\caption{
		Meshes of 550 non-overlapping quadrilaterals with linear and quartic boundary polynomial order for the FSP test.
		Observe---see the lower right corner---that the boundary is approximated more accurately for the higher (quartic) boundary polynomial.
	}
  	\label{fig:fsp_meshes}
\end{figure}

We now numerically investigate how the type of flux vector splitting in curvilinear coordinates combined with the DG-USBP operator influences the FSP property.
To this end, we consider an unstructured curvilinear mesh with varying $N_{\textrm{geo}}$ values and present the errors for an FSP problem for different flux vector splittings and operator orders.
Specifically, we consider a domain with curvilinear outer and inner boundaries of different geometric polynomial degrees, generated in Julia with HOHQMesh.jl~\cite{kopriva2024hohqmeshjl}.
The domain is illustrated in \cref{fig:fsp_meshes} for linear and quartic boundaries.
The curvilinear domain is a heavily warped annulus-type domain with the outer and inner curved boundaries given by
\begin{equation}
\begin{aligned}
	\textrm{outer}
		&= \begin{cases}
			x(t) = 4\cos(2 \pi t) - 0.6 \cos^3(8 \pi t) \\
			y(t) = 4\sin(2 \pi t) - 0.5 \sin(10 \pi t)
		\end{cases}	\\
	\textrm{inner}
		&= \begin{cases}
			x(t) = 1.5 \cos(2 \pi t) - 0.2 \cos^2(8 \pi t)\\
			y(t) = 1.5 \sin(2 \pi t) - 0.2 \sin(11 \pi t)
		\end{cases}
\end{aligned}
\end{equation}
with the parameter $t\in[0,1]$.
The domain is divided into 550 non-overlapping quadrilateral elements.
The domain boundaries are highly nonlinear, and we compute results on four versions of the approximate mesh boundaries:
bi-linear boundary elements ($N_{\textrm{geo}}=1$),
quadratic boundary elements ($N_{\textrm{geo}}=2$),
cubic boundary elements ($N_{\textrm{geo}}=3$), and
quartic boundary elements ($N_{\textrm{geo}}=4$).
\cref{fig:fsp_meshes} shows the bilinear and quartic boundary meshes to compare
the resolution of the curved boundary elements as $N_{\textrm{geo}}$ increases. 
Furthermore, we consider the two-dimensional compressible Euler equations on the curvilinear domain illustrated in \cref{fig:fsp_meshes} with a constant solution state given in the conservative variables as
\begin{equation}\label{eq:fsp_sol}
	u_\infty
	=
	\begin{pmatrix}
		\rho_\infty\\
		(\rho v_1)_\infty\\
		(\rho v_2)_\infty\\
		(\rho e)_\infty
	\end{pmatrix}
	=
	\begin{pmatrix}
		1.0\\
		0.1\\
		-0.2\\
		10.0
	\end{pmatrix},
\end{equation}
where the compressible Euler fluxes are all constant, and their divergence vanishes.
Importantly, this is not necessarily the case for the numerical solution.
To demonstrate this, we examine the FSP property for the Lax--Friedrichs and van Leer-H\"{a}nel splittings for different values of $N_{\textrm{geo}}$ and varying orders of the proposed DG-USBP operators.
See \cref{tab:fsp_LLF,tab:fsp_vLH}, respectively.
In all cases, the final time of all the FSP tests is $10$ and
we impose Dirichlet boundary conditions via the background solution state \cref{eq:fsp_sol}.

\begin{table}[tb]
\centering
   	\begin{adjustbox}{max width=\textwidth}
    \begin{tabular}{lcccccccccc}
      	\toprule
      	interior order & 2 & 3 & 4 & 5 & 6 & 7 & 8 & 9 \\
      	\midrule
      	bi-linear mesh & \num{2.92e-14} & \num{9.96e-15} & \num{1.06e-14} & \num{2.08e-14} & \num{1.83e-14} & \num{3.76e-14}  & \num{4.12e-14} & \num{5.96e-14} \\
      	quadratic mesh & \num{1.19e-14} & \num{1.12e-14} & \num{5.53e-14} & \num{1.93e-14} & \num{1.81e-14} & \num{4.12e-14}  & \num{3.89e-14} & \num{2.84e-14} \\
      	cubic mesh & \num{5.52e-14} & \num{1.45e-14} & \num{7.49e-15} & \num{1.92e-14} & \num{2.05e-14} & \num{6.67e-14} & \num{4.56e-14} & \num{5.67e-14} \\
      	quartic mesh & \num{2.29e-15} & \num{8.95e-15} & \num{6.03e-15} & \num{2.00e-14} & \num{2.16e-14} & \num{5.83e-14} & \num{2.37e-14} & \num{4.81e-14} \\
      	\bottomrule
   	\end{tabular}
    \end{adjustbox}
    \caption{
    		FSP error for the Lax--Friedrichs splitting.
		The splitting depends linearly on the metric terms and satisfies the FSP property for all approximation and mesh orders.
	}
	\label{tab:fsp_LLF}
\end{table}

\begin{table}[tb]
\centering
   	\begin{adjustbox}{max width=\textwidth}
    \begin{tabular}{lcccccccc}
      	\toprule
      	interior order & 2 & 3 & 4 & 5 & 6 & 7 & 8 & 9\\
      	\midrule
      	bi-linear mesh & \num{1.21e-5} & \num{4.44e-15} & \num{6.49e-15} & \num{1.20e-14} & \num{1.40e-14} & \num{3.85e-14} & \num{5.31e-13} & \num{9.56e-13} \\
      	quadratic mesh & \num{1.38e-5} & \num{1.43e-6} & \num{1.81e-7} & \num{1.11e-14} & \num{1.51e-14} & \num{1.15e-14} & \num{2.22e-13} & \num{7.56e-13} \\
      	cubic mesh & \num{1.37e-5} & \num{1.56e-6} & \num{2.28e-7} & \num{2.40e-8} & \num{2.42e-9} & \num{3.09e-14} & \num{3.75e-13} & \num{9.19e-13} \\
      	quartic mesh & \num{1.38e-5} & \num{1.57e-6} & \num{3.93e-7} & \num{4.21e-8} & \num{1.05e-8} & \num{9.93e-10} & \num{2.21e-10} & \num{9.87e-13} \\
      	\bottomrule
    	\end{tabular}
    \end{adjustbox}
    \caption{
    		FSP error for the van Leer--H\"{a}nel splitting.
		The splitting depends quadratically on the metric terms and satisfies the FSP property if the boundary closure accuracy of the DG-USBP operator is not smaller than $2N_{\textrm{geo}}$.
	}
	\label{tab:fsp_vLH}
\end{table}

\subsubsection*{Discussion of the results}

In accordance with \cite{ranocha2023high}, the Lax--Friedrichs splitting, depending linearly on the metric terms, satisfies the FSP property for all considered orders.
In contrast, the van Leer--H\"{a}nel splitting, depending quadratically on the metric terms, only satisfies the FSP property if the boundary closure of the DG-USBP operator can exactly differentiate polynomials up to order $mN_{\textrm{geo}} = 2 N_{\textrm{geo}}$.
Finally, our results in \cref{tab:fsp_vLH} imply that more complex flux vector splittings require higher boundary closure accuracy to ensure the boundary is sufficiently accurate.

\subsection{Isentropic vortex}
\label{sub:isentropic_vortex}

Next, we investigate the long-term stability of the DG-USBP method.
To this end, we follow \cite{sjogreen2018high} and consider the classical isentropic vortex test case of \cite{shu1997essentially} for the two-dimensional Euler equations with initial conditions
\begin{equation}
\begin{aligned}
  	T & = T_0 - \frac{(\gamma-1) \epsilon^2}{8 \gamma \pi^2} \exp\bigl(1-r^2\bigr), \\
   	\rho & = \rho_0 (T / T_0)^{1 / (\gamma - 1)}, \\
	v & = v_0 + \frac{\varepsilon}{2 \pi} \exp\bigl((1-r^2) / 2\bigr) (-x_2, x_1)^T.
\end{aligned}
\end{equation}
Here, $\epsilon = 10$ is the vortex strength, $r$ is the distance from the origin, $T = p / \rho$ the temperature, $\rho_0 = 1$ the background density, $v_0 = (1, 1)^T$ the background velocity,
$p_0 = 10$ the background pressure, $\gamma = 1.4$, and
$T_0 = p_0 / \rho_0$ the background temperature.
The domain $[-5, 5]^2$ is equipped with periodic boundary conditions.
We use the same time integration method and approach to compute the discrete $L^2$ error of the density as in \Cref{sub:convergence_Euler}.

\begin{figure}[tb]
	\centering
	\begin{subfigure}[b]{0.495\textwidth}
		\includegraphics[width=\textwidth]{%
      		figures/isentropic_vortex_DGSEM_256elements}
    		\caption{DGSEM}
	\label{fig:isentropic_vortex_DGSEM_256elements}
  	\end{subfigure}%
	\begin{subfigure}[b]{0.495\textwidth}
		\includegraphics[width=\textwidth]{%
      		figures/isentropic_vortex_USBP_256elements_sigma3}
    		\caption{DG-USBP method with $\lambda_N = -10^{-3}$}
		\label{fig:isentropic_vortex_USBP_256elements_sigma3}
  	\end{subfigure}%
  	\caption{
	Density error for the isentropic vortex test case of the DG-USBP and DGSEM method for $J=256$ elements.
  	We use degree one ($d=1$, blue dashed line), two ($d=2$, orange dotted line), and three ($d=3$, green dash-dotted line) tensor-product DG-USBP operators on $N=3, 4, 5$ LGL nodes per coordinate direction and element.
	For the DGSEM method, we use traditional degree two, three, and four DG-SBP operators on the same LGL nodes as the DG-USBP operators.
  	}
  	\label{fig:isentropic_vortex}
\end{figure}

\cref{fig:isentropic_vortex} illustrates the discrete $L^2$-error of the density for long-time simulations for the DG-USBP method with Steger--Warming splitting and the DGSEM using the volume flux of Ranocha.
We use three ($N=3$, blue straight line), four ($N=4$, orange dashed line), and five ($N=5$, green dotted line) LGL nodes per coordinate direction and element.
As before, the DG-USBP and DGSEM methods use the same points.
\cref{fig:isentropic_vortex} report these results for $J= 256$ elements and $\lambda_N = -10^{-3}$.
The remaining parameters are zero to preserve exactness for polynomials up to degree $d=N-2$.
We observe that both the DG-USBP and DGSEM methods remain stable and can run the simulations successfully for a long time.
Notably, the two methods perform roughly the same for three LGL nodes.
For four and five LGL nodes, however, we observe the DGSEM method to initially produce smaller density errors than the DG-USBP method.
In all cases, the two methods perform roughly the same towards the end of the simulation, indicating a comparable long-time behavior.

\subsubsection*{Discussion of the results}

The above results indicate that USBP operators offer less improvement in robustness for DG methods compared to FD schemes.

\subsection{Kelvin--Helmholtz instability}
\label{sub:KH_instability}

The above results for the isentropic vortex test (see \Cref{sub:isentropic_vortex}) indicated that USBP operators offer less improvement in robustness for DG methods compared to FD schemes.
To further investigate this hypothesis, we consider the Kelvin--Helmholtz instability setup for the two-dimensional compressible Euler equations of an ideal fluid as in  \cite{rueda2021}. 
The corresponding initial condition is
\begin{equation}
  \rho = \frac{1}{2} + \frac{3}{4} B(x,y),
  \quad
  p = 1,
  \quad
  v_1 = \frac{1}{2} \bigl( B(x,y) - 1 \bigr),
  \quad
  v_2 = \frac{1}{10}\sin(2 \pi x),
\end{equation}
where $B(x, y) = \tanh(15 y + 7.5) - \tanh(15 y - 7.5)$ is a smoothed approximation to a discontinuous step function.
The domain is $[-1,1]^2$ with time interval $[0, 15]$.
We perform time integration of the semi-discretizations using the third-order, four-stage SSP method detailed in \cite{kraaijevanger1991contractivity}.
This is coupled with the embedded method proposed in \cite{conde2022embedded} and an error-based step size controller as in \cite{ranocha2021optimized}.
The adaptive time step controller tolerances are set to $10^{-6}$.
Here, we compare the DG-USBP method outlined in this paper with the DGSEM, which employs various volume fluxes and a local Lax--Friedrichs (Rusanov) surface flux.

\begin{table}[tb]
\centering
	\begin{adjustbox}{width=0.99\textwidth}
	\begin{tabular}{ c | r r r r}
  		\multicolumn{5}{c}{$N=3$} \\
		\toprule
  		\multicolumn{1}{c}{$J$} & $16$ & $64$ & $256$ & $1024$ \\
		\midrule
		\multicolumn{5}{c}{DG-USBP, van Leer--H\"anel splitting} \\
		\midrule
		$\lambda_3$ & \multicolumn{4}{c}{final times} \\
		$-10^{-3}$
			& \textbf{15.00} & 4.93 & 1.89 & 2.00 \\
		$-10^{-2}$
			& \textbf{15.00} & \textbf{15.00} & 1.84 & 4.41 \\
		$-10^{-1}$
			& \textbf{15.00} & \textbf{15.00} & \textbf{15.00} & \textbf{15.00} \\
		\midrule
		\multicolumn{5}{c}{DG-USBP, Steger--Warming splitting} \\
    		\midrule
		$\lambda_3$ & \multicolumn{4}{c}{final times} \\
		$-10^{-3}$
			& \textbf{15.00} & 4.85 & 1.96 & 1.85 \\
		$-10^{-2}$
			& \textbf{15.00} & \textbf{15.00} & 1.88 & 4.30 \\
		$-10^{-1}$
			& \textbf{15.00} & \textbf{15.00} & \textbf{15.00} & \textbf{15.00} \\
		\midrule
		\multicolumn{5}{c}{DGSEM} \\
    		\midrule
		flux & \multicolumn{4}{c}{final times} \\
		Shima
			& \textbf{15.00} & 2.92 & 3.25 & 3.03 \\
		Ranocha
			& \textbf{15.00} & 4.68 & 4.81 & 4.12 \\
		\bottomrule
	\end{tabular}
	~
	\begin{tabular}{ c | r r r r}
  		\multicolumn{5}{c}{$N=4$} \\
		\toprule
  		\multicolumn{1}{c}{$J$} & $16$ & $64$ & $256$ & $1024$ \\
		\midrule
		\multicolumn{5}{c}{DG-USBP, van Leer--H\"anel splitting} \\
    		\midrule
		$\lambda_4$ & \multicolumn{4}{c}{final times} \\
		$-10^{-3}$
			& 4.57 & 10.73 & 1.60 & 1.85 \\
		$-10^{-2}$
			& 4.58 & 10.55 & 1.61 & 1.80 \\
		$-10^{-1}$
			& \textbf{15.00} & \textbf{15.00} & \textbf{4.82} & 3.48 \\
		\midrule
		\multicolumn{5}{c}{DG-USBP, Steger--Warming splitting} \\
    		\midrule
		$\lambda_4$ & \multicolumn{4}{c}{final times} \\
		$-10^{-3}$
			& 4.64 & 1.68 & 1.61 & 1.73 \\
		$-10^{-2}$
			& 4.62 & 1.78 & 1.62 & 1.81 \\
		$-10^{-1}$
			& \textbf{15.00} & 1.91 & 4.78 & 3.49 \\
		\midrule
		\multicolumn{5}{c}{DGSEM} \\
    		\midrule
		flux & \multicolumn{4}{c}{final times} \\
		Shima
			& 2.73 & 1.38 & 2.82 & 2.88 \\
		Ranocha
			& 4.46 & 1.53 & 3.77 & \textbf{3.66} \\
		\bottomrule
	\end{tabular}
	~
	\begin{tabular}{ c | r r r r}
  		\multicolumn{5}{c}{$N=5$} \\
		\toprule
  		\multicolumn{1}{c}{$J$} & $16$ & $64$ & $256$ & $1024$ \\
		\midrule
		\multicolumn{5}{c}{DG-USBP, van Leer--H\"anel splitting} \\
    		\midrule
		$\lambda_5$ & \multicolumn{4}{c}{final times} \\
		$-10^{-3}$
			& 1.38 & 2.91 & 1.48 & 3.62 \\
		$-10^{-2}$
			& 1.39 & 2.92 & 1.51 & 3.62 \\
		$-10^{-1}$
			& 1.52 & \textbf{6.23} & 3.69 & 3.61 \\
		\midrule
		\multicolumn{5}{c}{DG-USBP, Steger--Warming splitting} \\
    		\midrule
		$\lambda_5$ & \multicolumn{4}{c}{final times} \\
		$-10^{-3}$
			& 1.27 & 2.59 & 1.51 & 3.62 \\
		$-10^{-2}$
			& 1.29 & 2.88 & 1.52 & 3.64 \\
		$-10^{-1}$
			& 1.47 & 5.65 & 3.67 & 3.60 \\
		\midrule
		\multicolumn{5}{c}{DGSEM} \\
    		\midrule
		flux & \multicolumn{4}{c}{final times} \\
		Shima
			& 1.81 & 3.05 & 3.29 & 3.36 \\
		Ranocha
			& \textbf{2.47} & 4.04 & \textbf{4.44} & \textbf{4.27} \\
		\bottomrule
	\end{tabular}
	\end{adjustbox}
	\caption{
		Final times for the numerical simulations of the Kelvin--Helmholtz instability with $J$ elements.
		Final times less than 15 indicate that the simulation crashed.
		We use the same three ($N=3$), four ($N=4$), and five ($N=5$) LGL nodes per coordinate direction and element for the tensor-product DG-USBP and DGSEM method.
	We compare different parameters $\lambda_N$, splittings, and numerical fluxes.
	The highest final times for each combination of $J$ and $N$ are highlighted in bold.
	}
	\label{tab:kelvin_helmholtz}
\end{table}

\cref{tab:kelvin_helmholtz} presents the final simulation times for various numerical experiments of the Kelvin--Helmholtz instability.
Final times below 15 signify that the simulation terminated prematurely. 
For the tensor-product DG-USBP and DGSEM methods, we use the same three ($N=3$), four ($N=4$), and five ($N=5$) LGL nodes per coordinate direction and element.
To preserve exactness for polynomials up to degree $d=N-2$, we have chosen the other dissipation parameters as zero, i.e., $\lambda_1=\dots=\lambda_{N-1} = 0$, and test different parameters $\lambda_N$.
A notable observation is that reducing the parameters $\lambda_N$ in the DG-USBP method often leads to an extension in the final time of the corresponding simulations.
This trend suggests an enhancement in the method's robustness, although the difference decreases with an increased number of elements.
Specifically, for three LGL points ($N=3$) and sufficiently small $\lambda_3$, we observe the DG-USBP method to finish the simulation in all cases, while the DGSEM crashed for most of them.
At the same time, we observe that the DG-USBP method offers less improvement in robustness as the number of LGL points ($N$) increases.
For instance, the DGSEM with the Ranocha flux yields the longest run times in almost all cases for $N=5$.
Once more, this highlights that USBP operators offer less improvement in robustness for high-order DG methods than for FD schemes.

\subsubsection*{Discussion of the results}

We observe in \cref{tab:kelvin_helmholtz} that combining lower-order DG-USBP with flux vector splittings can increase the robustness of nodal DG methods.
Specifically, for three LGL points ($N=3$) and sufficiently small $\lambda_3$, we observe the DG-USBP method to finish the simulation in all cases, while the DGSEM crashed for most of them.
At the same time, we observe that the DG-USBP method offers less improvement in robustness as the number of LGL points ($N$) increases.
For instance, the DGSEM with the Ranocha flux yields the longest run times in almost all cases for $N=5$.
Once more, this highlights that USBP operators offer less improvement in robustness for high-order DG methods than for FD schemes.
This reduced improvement could be attributed to USBP methods adding dissipation only to unresolved modes.
After all, FD schemes typically have more unresolved modes than nodal DG methods.

\subsubsection*{Density profiles at crash time}

\begin{figure}[tb]
	\centering
	\begin{subfigure}[b]{0.45\textwidth}
		\includegraphics[width=\textwidth]{%
      		figures/DGUSBP_vLH_N4_lambda_em1_1024elements}
    		\caption{DG-USBP, van Leer--H\"anel, crashed at $t=3.48$}
    		\label{fig:DGUSBP_vLH_N4_lambda_em1_1024elements}
  	\end{subfigure}%
	~
	\begin{subfigure}[b]{0.45\textwidth}
		\includegraphics[width=\textwidth]{%
      		figures/DGUSBP_SW_N4_lambda_em1_1024elements}
    		\caption{DG-USBP, Steger--Warming, crashed at $t=3.49$}
    		\label{fig:DGUSBP_SW_N4_lambda_em1_1024elements}
  	\end{subfigure}%
	\\
	\begin{subfigure}[b]{0.45\textwidth}
		\includegraphics[width=\textwidth]{%
      		figures/DGSEM_Shima_N4_1024elements}
    		\caption{DGSEM, Shima flux, crashed at $t=2.88$}
    		\label{fig:DGSEM_Shima_N4_1024elements}
  	\end{subfigure}%
	~
	\begin{subfigure}[b]{0.45\textwidth}
		\includegraphics[width=\textwidth]{%
      		figures/DGSEM_Ranocha_N4_1024elements}
    		\caption{DGSEM, Ranocha flux, crashed at $t=3.66$}
    		\label{fig:DGSEM_Ranocha_N4_1024elements}
  	\end{subfigure}%
  	\caption{
	Density profile for the DG-USBP method and DGSEM on $J=1024$ elements when the Kelvin--Helmholtz instability simulations crashed.
	Both methods use six LGL nodes per coordinate direction and element.
	Hence, all simulations use the same number of DOFs.
	Furthermore, the DG-USBP methods use $\lambda_5 = -10^{-1}$.
	The white spots mark points where the pressure (DG-USBP) or the density (DGSEM) is non-positive.
  	}
  	\label{fig:KH_N4_1024elements}
\end{figure}

\cref{fig:KH_N4_1024elements} compares the density profiles from simulations of the Kelvin--Helmholtz instability captured at the crash time.
These simulations were conducted on $J=1024$ elements using the DG-USBP method and the DGSEM.
For both the DG-USBP method and DGSEM, we used four ($N=4$) LGL nodes per coordinate direction and element.
That is, all simulations were performed with the same number of DOFs, ensuring a consistent comparison base.
Furthermore, we used $\lambda_4 = -10^{-1}$ for the DG-USBP method and the volume fluxes of Shima et al.\ and Ranocha for the DGSEM.
In \cref{fig:KH_N4_1024elements}, the white spots indicate locations where non-positive values are observed for pressure (in the case of DG-USBP) or density (for DGSEM).
Notably, these problematic nodes consistently occur at the interfaces between elements.
Interestingly, the same was reported for non-periodic FD-USBP operators in \cite{ranocha2023high}, suggesting a general trend.
We also observed this phenomenon for other simulations with different numbers of elements and nodes per element.

\subsection{The inviscid Taylor--Green vortex}
\label{sub:TG_vortex}

Recall that previous works~\cite{mattsson2017diagonal,mattsson2018compatible,lundgren2020efficient,stiernstrom2021residual,ranocha2023high,duru2024dual} have demonstrated that combining high-order USBP operators with flux vector splittings can enhance the robustness of FD schemes without introducing excessive artificial dissipation.
In this work, we have investigated whether combining USBP operators with flux vector splittings can be applied to nodal DG methods with similar success.
However, the numerical tests presented above demonstrated both advantages and disadvantages for DG-USBP schemes compared to existing high-order DGSEM methods.
On the one hand, we observed that DG-USBP methods do not necessarily introduce excessive artificial dissipation (\Cref{sub:convergence_advection,sub:convergence_Euler}), yield local linear/energy stability in a setting where existing high-order entropy-stable flux differencing DGSEM produce unstable semi-discretizations (\Cref{sub:local_syability}), and extend to curvilinear while ensuring FSP (\Cref{sub:fsp_tests}).
On the other hand, we found that DG-USBP schemes are often less accurate than existing high-order DGSEM (\Cref{sub:convergence_Euler,sub:isentropic_vortex}), generally yield stiffer semi-discretizations (\Cref{sub:spectral}), and do not always offer improvements in robustness (\Cref{sub:KH_instability})---which contrasts with observations for FD schemes.
Specifically, in simulating the Kelvin--Helmholtz instability in \Cref{sub:KH_instability}, we observed that USBP operators improve robustness for lower-order operators with three LGL points.
Still, this improvement vanishes when higher-order operators with four and five LGL points are used.
Recall that USBP operators add artificial dissipation only to the unresolved modes.
We thus suspect that the reduced improvement can be explained by the ratio $1/(N-1)$ between the number of unresolved modes\footnote{The DG-USBP operators considered here add artificial dissipation only to the highest mode}, which is $1$, and the number of resolved modes, which is $N-1$, decreasing as more LGL nodes are used.

To shed additional light on these observations, we investigate the kinetic energy dissipation properties of the DG-USBP method, following the approach of \cite{gassner2016split}. 
To this end, we consider the inviscid Taylor--Green vortex test case for the three-dimensional compressible Euler equations of an ideal gas.
The corresponding initial condition is
\begin{equation}
\begin{aligned}
  	\rho & = 1,
  	\quad
  	p = \frac{\rho}{\mathrm{Ma}^2 \gamma} + \rho \frac{\cos(2 x_1) \cos(2 x_3) + 2 \cos(2 x_2) + 2 \cos(2 x_1) + \cos(2 x_2) \cos(2 x_3)}{16}, \\
  v_1 & = \sin(x_1) \cos(x_2) \cos(x_3),
  \quad
  v_2 = -\cos(x_1) \sin(x_2) \cos(x_3), \quad
  v_3 = 0,
\end{aligned}
\end{equation}
with Mach number $\mathrm{Ma} = 0.1$.
The domain is $[-\pi,\pi]^3$ with periodic boundary conditions and the time interval is $[0, 20]$. 
Time integration is performed as in \Cref{sub:KH_instability}.

\begin{figure}[tb]
	\centering
	\begin{subfigure}[b]{0.45\textwidth}
		\includegraphics[width=\textwidth]{%
      		figures/TGV_dissipation_rate_level4_N3_lambda_em1}
    		\caption{Dissipation rate for $N=3$}
    		\label{fig:TGV_dissipation_rate_level4_N3_lambda_em1}
  	\end{subfigure}%
	~
	\begin{subfigure}[b]{0.45\textwidth}
		\includegraphics[width=\textwidth]{%
      		figures/TGV_kinetic_energy_level4_N3_lambda_em1}
    		\caption{Kinetic energy for $N=3$}
    		\label{fig:TGV_kinetic_energy_level4_N3_lambda_em1}
  	\end{subfigure}%
	\\
	\begin{subfigure}[b]{0.45\textwidth}
		\includegraphics[width=\textwidth]{%
      		figures/TGV_dissipation_rate_level4_N4_lambda_em1}
    		\caption{Dissipation rate for $N=4$}
    		\label{fig:TGV_dissipation_rate_level4_N4_lambda_em1}
  	\end{subfigure}%
	~
	\begin{subfigure}[b]{0.45\textwidth}
		\includegraphics[width=\textwidth]{%
      		figures/TGV_kinetic_energy_level4_N4_lambda_em1}
    		\caption{Kinetic energy for $N=4$}
    		\label{fig:TGV_kinetic_energy_level4_N4_lambda_em1}
  	\end{subfigure}%
  	\caption{
		Discrete kinetic energy and dissipation rate for the inviscid Taylor--Green vortex.
		We compare the results of a DG-USBP method with the DGSEM.
		Both methods use $J=4096$ elements with three ($N=3$, top row) and four ($N=4$, bottom row) LGL points per element and coordinate direction.
		The DGSEM uses the entropy-conservative volume flux of Ranocha and the local Lax-Friedrichs (Rusanov) surface flux.
		The DG-USBP method uses the Steger--Warming splitting and $\lambda_{N} = -10^{-1}$. 
  	}
  	\label{fig:TG_dissipation}
\end{figure}

\cref{fig:TG_dissipation} depicts the discrete kinetic energies and their dissipation rates of a DG-USBP method with a DGSEM.
Both methods use $J=4096$ elements with three ($N=3$, \cref{fig:TGV_dissipation_rate_level4_N3_lambda_em1,fig:TGV_kinetic_energy_level4_N3_lambda_em1}) and four ($N=4$, \cref{fig:TGV_dissipation_rate_level4_N4_lambda_em1,fig:TGV_kinetic_energy_level4_N4_lambda_em1}) LGL points per element and coordinate direction.
The DGSEM uses the entropy-conservative volume flux of Ranocha and the local Lax-Friedrichs (Rusanov) surface flux.
The DG-USBP method uses the Steger--Warming splitting and $\lambda_{N} = -10^{-1}$.
The discrete version of the total kinetic energy is computed as
\begin{equation}
	E_\mathrm{kin}(t)
		= \int \frac{1}{2} \rho(t, x) v(t, x)^2 \dif x
\end{equation}
using the quadrature rule associated with the SBP norm matrix $P$ at every tenth accepted time step.
Central finite differences are then employed to calculate the discrete kinetic energy dissipation rate, $-\Delta E_\mathrm{kin} / \Delta t$, approximating $-\dif E_\mathrm{kin} / \dif t$.
Notably, in the case of three LGL points ($N=3$, \cref{fig:TGV_dissipation_rate_level4_N3_lambda_em1,fig:TGV_kinetic_energy_level4_N3_lambda_em1}), we observe the DG-USBP method often has a higher dissipation rate than the DGSEM and consequently to have smaller kinetic energies.
This implies that the DG-USBP method introduces more artificial dissipation, which also explains the improved robustness we have observed in \Cref{sub:KH_instability} when using three LGL points.
At the same time, for four LGL points ($N=4$, \cref{fig:TGV_dissipation_rate_level4_N4_lambda_em1,fig:TGV_kinetic_energy_level4_N4_lambda_em1}), we find the kinetic energy of the DG-USBP method to be larger than that of the DGSEM towards the end of the simulation.
This implies that now the DG-USBP method introduces less artificial dissipation, which explains the reduced robustness we have observed in \Cref{sub:KH_instability} when using four LGL points.

\subsubsection*{Discussion of the results}

The above results indicate that the DG-USBP method introduces more artificial dissipation than DGSEM for three LGL points and less for four LGL points.
This also explains the varying improvements in robustness we have observed in \Cref{sub:KH_instability}.
Moreover, the above findings provide additional evidence that combining USBP operators with flux vector splittings generally improves the robustness of nodal DG methods less than high-order FD schemes.
The above results indicate that this reduced improvement in robustness can be explained by the decreasing ratio $1/(N-1)$ between the number of unresolved modes, which is $1$, and the number of resolved modes, which is $N-1$, decreasing as more LGL nodes are used.
\section{Concluding remarks} 
\label{sec:summary} 

Previous studies, such as \cite{ranocha2023high,duru2024dual}, have demonstrated that combining high-order USBP operators with flux vector splittings can increase the robustness of FD schemes for under-resolved simulations without introducing excessive artificial dissipation. 
Here, we investigated whether combining USBP operators with flux vector splittings can be applied to nodal DG methods with similar success. 
We began by demonstrating the existence of USBP operators on arbitrary grid points and providing a straightforward procedure for their construction. 
While previous works focused on diagonal-norm FD-USBP operators, our discussion encompassed a broader class of USBP operators. 
Importantly, the proposed construction procedure is applied to arbitrary grid points and is not limited to equidistant spacing. 
This generalization enabled us to develop novel USBP operators on LGL points, which are well-suited for nodal DG methods. 
We then examined the robustness properties of the resulting DG-USBP methods for challenging examples of the compressible Euler equations, such as Kelvin-Helmholtz instabilities. 
Similar to high-order FD-USBP schemes, we found that combining flux vector splitting techniques with DG-USBP operators does not lead to excessive artificial dissipation. 
Furthermore, we demonstrated local linear/energy stability for the DG-USBP semi-discretizations in a setting where existing high-order entropy-stable flux differencing DGSEM produce unstable semi-discretizations. 
However, we also observed that the improvement in robustness offered by USBP operators is less significant for high-order DG methods compared to FD schemes. 
Specifically, the increase in robustness diminished as more LGL points were used per element and coordinate direction. 
We suspected this reduced improvement could be attributed to USBP methods adding dissipation only to unresolved modes. 
After all, FD schemes typically have more unresolved modes than nodal DG methods. 
Still, the DG-USBP approach presents an alternative to entropy stable flux differencing DGSEM. 
It is potentially more convenient for tackling complex problems where a reliable flux splitting method, like the Lax--Friedrichs one, is readily available while conducting an entropy analysis becomes particularly challenging.

While the current iteration of DG-USBP methods has limitations, there is room for improvement. 
Future endeavors could explore the potential of optimally selecting these operators' dissipation parameters in a manner similar to \cite{discacciati2020controlling,zeifang2021data,hillebrand2023applications}. 
In this study, we focused on establishing the general theory, providing a construction methodology, and demonstrating the practical application of USBP operators in DG methods rather than fine-tuning the operators. 
Additionally, although not explicitly demonstrated here, the established general theory and construction framework for USBP operators extend to multi-dimensional SBP operators \cite{nordstrom2001finite,hicken2016multidimensional} and FSBP operators for general function spaces \cite{glaubitz2022summation,glaubitz2023multi,glaubitz2023summation}. 
These avenues will form the core of future research efforts, 
as said methods typically use oversampling and, therefore, have more unresolved modes that can be dissipated using USBP operators. Thus, we expect USBP operators may offer greater improvements in such cases, similar to what was observed for FD schemes in \cite{ranocha2023high,duru2024dual}.

\section*{Acknowledgements}
JG was supported by the US DOD (ONR MURI) grant \#N00014-20-1-2595. 
HR was supported by the Deutsche Forschungsgemeinschaft (DFG, German Research Foundation, project numbers 513301895 and 528753982 as well as within the DFG priority program SPP~2410 with project number 526031774) and the Daimler und Benz Stiftung (Daimler and Benz foundation,
project number 32-10/22).
ARW was funded through Vetenskapsr{\aa}det, Sweden grant
agreement 2020-03642 VR.
MSL received funding through the DFG research unit FOR~5409 "Structure-Preserving Numerical Methods
for Bulk- and Interface Coupling of Heterogeneous Models (SNuBIC)" (project number 463312734),
as well as through a DFG individual grant (project number 528753982).
P\"O was supported by the DFG within the priority research program
SPP 2410, project OE 661/5-1
(525866748) and under the personal grant 520756621 (OE 661/4-1).
GG acknowledges funding through the Klaus-Tschira Stiftung via the project ``HiFiLab'' and received funding through the DFG research unit FOR 5409 ``SNUBIC'' and through the BMBF funded project ``ADAPTEX''.
Some of the computations were enabled by resources provided by the National Academic Infrastructure for Supercomputing in Sweden (NAISS), partially funded by the Swedish Research Council through grant agreement no.\ 2022-06725.

\appendix
\section{Proof of \cref{thm:existence}} 
\label{app:proof_existence} 

Henceforth, let $\mathbf{x} = [x_1,\dots,x_N]^T$ be a grid on the computational domain $[x_L,x_R]$ and $\mathcal{P}_d$ be the space of polynomials up to degree $d$ with $d+1 \leq N$. 
We first demonstrate part (b) of \cref{thm:existence}. 

\begin{lemma}[Part (b) of \cref{thm:existence}]\label{lem:2nd_statement}
	Let $D_{\pm} = P^{-1}( Q_{\pm} + B/2 )$ be degree $d$ USBP operators with $Q_+ + Q_+^T = S$. 
	Then, $D = (D_+ + D_-)/2$ is a degree $d$ SBP operator and $S$ satisfies $S \mathbf{f} = \mathbf{0}$ for all $f \in \mathcal{P}_d$.  
\end{lemma}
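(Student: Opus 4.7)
The plan is to verify the four defining conditions of a degree $d$ SBP operator for the averaged operator $D = (D_+ + D_-)/2$, and then deduce the polynomial annihilation property of $S$ from the accuracy condition (i) combined with the identity $D_+ - D_- = P^{-1}S$ already recorded after \cref{def:USBP}. Writing $D = P^{-1}(Q + B/2)$ with $Q = (Q_+ + Q_-)/2$ matches the structural ansatz of \cref{def:SBP}, so the task reduces to verifying items (i)--(iv) of \cref{def:SBP}.

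First I would dispatch the easy items: condition (ii) is immediate because $P$ is inherited unchanged from the USBP pair and is SPD by USBP (ii); condition (iii) follows because the USBP boundary matrix $B$ is exact for $\mathcal{P}_\tau$ with $\tau \geq d$, hence in particular for $\mathcal{P}_d$; condition (i) follows by linearity, averaging the two identical actions $D_+ \mathbf{f} = D_- \mathbf{f} = \mathbf{f\,'}$ guaranteed by USBP (i). The only nontrivial step is condition (iv), the anti-symmetry of $Q$. Here the key algebraic ingredient is USBP (iv), which states $Q_+ + Q_-^T = 0$, i.e.\ $Q_- = -Q_+^T$. Substituting this into $Q + Q^T = (Q_+ + Q_-)/2 + (Q_+^T + Q_-^T)/2$ collapses the four terms in pairs and yields $Q + Q^T = 0$.

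For the second conclusion, I would exploit the relation
\begin{equation}
D_+ - D_- = P^{-1}(Q_+ - Q_-) = P^{-1}(Q_+ + Q_+^T) = P^{-1} S,
\end{equation}
where I again used $Q_- = -Q_+^T$ from USBP (iv) together with the definition $S = Q_+ + Q_+^T$ from USBP (v). Applying this identity to the nodal value vector $\mathbf{f}$ of an arbitrary $f \in \mathcal{P}_d$ and invoking USBP (i) gives $P^{-1} S \mathbf{f} = (D_+ - D_-) \mathbf{f} = \mathbf{0}$; since $P$ is SPD and hence invertible, $S \mathbf{f} = \mathbf{0}$, which is the desired statement.

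There is no real obstacle here; the lemma is essentially a book-keeping exercise that reorganizes the USBP hypotheses into the SBP template. The only point requiring a moment's care is making sure not to conflate the two roles played by $Q_+ + Q_-^T = 0$ and $Q_+ + Q_+^T = S$: the former is what forces the averaged $Q$ to be anti-symmetric, while the latter is what identifies the symmetric defect between $D_+$ and $D_-$ as the prescribed dissipation matrix $S$. Once these two identities are kept distinct, the proof is a handful of lines.
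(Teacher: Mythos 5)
Your proof is correct and follows essentially the same route as the paper: the key step for the annihilation property is the identity $D_+ - D_- = P^{-1}S$ combined with the shared exactness $D_+\mathbf{f} = D_-\mathbf{f} = \mathbf{f\,'}$ for $f \in \mathcal{P}_d$, exactly as in the paper's argument. The only difference is that for the first assertion the paper simply invokes \cref{lem:connection} (whose proof it defers to the literature), whereas you verify conditions (i)--(iv) of \cref{def:SBP} explicitly --- a correct and slightly more self-contained presentation of the same content.
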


\begin{proof} 
	The first assertion, $D = (D_+ + D_-)/2$ being a degree $d$ SBP operator, follows immediately from \cref{lem:connection}. 
	The second assertion, $S$ satisfying $S \mathbf{f} = \mathbf{0}$ for all $f \in \mathcal{P}_d$, has implicitly already been noted in \cite{mattsson2017diagonal,linders2020properties,linders2022eigenvalue}.
	For completeness, we briefly revisit the argument. 
	The relation \cref{eq:relation_D_pm} between $D_-$ and $D_+$ implies 
	\begin{equation}\label{eq:relation_D_pm_aux} 
	D_+ \mathbf{f} - D_- \mathbf{f} = P^{-1}S \mathbf{f}.
\end{equation}
	Furthermore, for $f \in \mathcal{P}_d$, (i) in \cref{def:USBP} yields $D_+ \mathbf{f} - D_- \mathbf{f} = \mathbf{0}$. 
	In this case, \cref{eq:relation_D_pm_aux} becomes $P^{-1}S \mathbf{f} = \mathbf{0}$, which shows the assertion. 
\end{proof}

It remains to prove statement (a) of \cref{thm:existence}. 

\begin{lemma}[Part (a) of \cref{thm:existence}]\label{lem:1st_statement}  
	Let $D = P^{-1}( Q + B/2 )$ be a degree $d$ USBP operator and let $S$ be a symmetric and negative semi-definite matrix with $S \mathbf{f} = \mathbf{0}$ for all $f \in \mathcal{P}_d$. 
	Then, $D_{\pm} = P^{-1}( Q_{\pm} + B/2 )$ with $Q_{\pm} = Q \pm S/2$ are degree $d$ USBP operators with $Q_+ + Q_+^T = S$.
\end{lemma}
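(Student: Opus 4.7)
The plan is to verify the five defining conditions of \cref{def:USBP} directly for the candidate operators $D_{\pm} = P^{-1}(Q_{\pm} + B/2)$ with $Q_{\pm} = Q \pm S/2$, reusing the SBP structure of $D$ and the two properties of $S$ (symmetry/negative semi-definiteness and annihilation of resolved modes). Conditions (ii) and (iii) will follow immediately since the norm matrix $P$ and the boundary matrix $B$ are inherited unchanged from the degree $d$ SBP operator $D$, and $\tau = d$ is admissible because \cref{def:USBP} only requires $\tau \geq d$. Note that the lemma statement should read ``let $D$ be a degree $d$ SBP operator'' (not USBP); this is clear from context since we apply part (a) starting from an SBP operator.

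Next I would dispatch condition (i). Since $D\mathbf{f} = \mathbf{f}'$ for all $f \in \mathcal{P}_d$ by the SBP property, and since $S\mathbf{f} = \mathbf{0}$ for all such $f$ by hypothesis, one computes
\begin{equation}
D_{\pm}\mathbf{f} \;=\; \bigl(D \pm \tfrac{1}{2}P^{-1}S\bigr)\mathbf{f} \;=\; \mathbf{f}' \pm \tfrac{1}{2}P^{-1}(S\mathbf{f}) \;=\; \mathbf{f}',
\end{equation}
which gives the degree $d$ accuracy for both $D_+$ and $D_-$ simultaneously.

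Finally, I would verify the two algebraic identities (iv) and (v) by direct expansion, using anti-symmetry of $Q$ (from $D$ being an SBP operator) and symmetry of $S$:
\begin{equation}
Q_+ + Q_-^T \;=\; \bigl(Q + \tfrac{S}{2}\bigr) + \bigl(Q - \tfrac{S}{2}\bigr)^T \;=\; (Q + Q^T) + \tfrac{1}{2}(S - S^T) \;=\; 0,
\end{equation}
and similarly
\begin{equation}
Q_+ + Q_+^T \;=\; (Q + Q^T) + \tfrac{1}{2}(S + S^T) \;=\; S,
\end{equation}
which is symmetric and negative semi-definite by hypothesis, establishing condition (v) and the claimed identity $Q_+ + Q_+^T = S$.

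There is no genuine obstacle here: the result is essentially a direct verification once one recognizes that $D_{\pm} = D \pm \tfrac{1}{2}P^{-1}S$ decomposes into an SBP part (handling accuracy, norm, boundary, and anti-symmetry) and a dissipation part (handling the $S$-related condition). The only bookkeeping care needed is tracking which properties of $S$ are invoked where: its symmetry cancels cross terms in (iv), while its kernel containing $\mathcal{P}_d$ preserves accuracy in (i), and its negative semi-definiteness is needed verbatim for (v).
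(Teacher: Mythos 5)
Your proof is correct and takes essentially the same approach as the paper's: condition (i) is verified via $S\mathbf{f}=\mathbf{0}$ for $f\in\mathcal{P}_d$, and conditions (iv) and (v) follow by the same direct expansions using the symmetry of $S$ and the anti-symmetry of $Q$. Your remark that the hypothesis should read ``degree $d$ SBP operator'' rather than ``USBP operator'' is also correct; that is a typo in the lemma statement.
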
 

\begin{proof} 
	We must show that $D_{\pm} = P^{-1}( Q_{\pm} + B/2 )$ satisfy (i), (iv), and (v) in \cref{def:USBP}. 
	We first demonstrate (i) in \cref{def:USBP}. 
	Let $f \in \mathcal{P}_d$. 
	Since $D_{\pm} = P^{-1}( Q_{\pm} + B/2 )$ and $S \mathbf{f} = \mathbf{0}$, we have 
	\begin{equation} 
	\begin{aligned}
		D_+ \mathbf{f} 
			& = D \mathbf{f} + P^{-1} S \mathbf{f} / 2
			= \mathbf{f\,'}, \\ 
		D_- \mathbf{f} 
			& = D \mathbf{f} - P^{-1} S \mathbf{f} / 2 
			= \mathbf{f\,'}, 
	\end{aligned}
	\end{equation}  
	which shows that the exactness conditions (i) in \cref{def:USBP} are satisfied. 
 	Next, (iv) in \cref{def:USBP} results from 
	\begin{equation} 
		Q_+ + Q_-^T 
			= (Q + S/2) + (Q - S/2)^T 
			= 0, 
	\end{equation} 
	where the last equation follows from $S$ being symmetric and $Q$ being anti-symmetric (see condition (iv) in \cref{def:SBP}). 
	Finally, condition (v) in \cref{def:USBP} holds since 
	\begin{equation} 
		Q_+ + Q_+^T 
			= (Q + S/2) + (Q + S/2)^T 
			= S, 
	\end{equation} 
	where the last equation follows from $S$ being symmetric and $Q$ being anti-symmetric. 
	This completes the proof. 
\end{proof}

\cref{thm:existence} follows from combining \cref{lem:2nd_statement,lem:1st_statement}.
\section{Diagonal-norm USBP operators on LGL nodes} 
\label{app:operators} 
\allowdisplaybreaks

Below, the DG-USBP operators on four ($N=4$), five ($N=5$), and six ($N=6$) LGL nodes on $\Omega_{\rm ref} = [-1,1]$ are presented. 
The operators are defined by the LGL nodes $\mathbf{x} = [x_1,\dots,x_N]^T$, the weights $\mathbf{p} = [p_1,\dots,p_N]^T$ associated with the diagonal norm matrix $P = \diag(\mathbf{p})$, the central degree $N-1$ SBP operator $D$, and the symmetric negative semi-definite dissipation matrix $S$. 
Recall that we get the USBP operators as $D_{\pm} = D \pm P^{-1} S /2$. 
Furthermore, the dissipation matrix $S$ can be decomposed as $S = V \Lambda V^T$ with $\Lambda = \diag(\lambda_1,\dots,\lambda_N)$ and $V$ being the Vandermonde matrix of a basis of DOPs up to degree $N-1$.

\subsection{DG-USBP operators on four LGL nodes} 
\label{app:operators_LGL_4nodes} 

The four ($N=4$) LGL nodes and associate weights on $\Omega_{\rm ref} = [-1,1]$ are $x_1 = -1$, $x_2 = -1/\sqrt{5}$, $x_3 = -x_2$, $x_4 = -x_2$ and $p_1 = 1/6$, $p_2 = 5/6$, $p_3 = p_2$, $p_4 = p_1$.
The central SBP operator $D = [D_{n,m}]_{n,m=1}^N$ has the entries 
{\small
\begin{align*}
	& D_{1,1} = -3.000000000000000000, \quad
	&& D_{1,2} = 4.045084971874736368, \quad
	&& D_{1,3} = -1.545084971874737478, \\
	& D_{1,4} = 0.500000000000000000, \quad
	&& D_{2,1} = -0.809016994374947229, \quad
	&& D_{2,2} = -0.000000000000000333, \\
	& D_{2,3} = 1.118033988749894903, \quad
	&& D_{2,4} = -0.309016994374947451, \quad
	&& D_{3,1} = 0.309016994374947451, \\
	& D_{3,2} = -1.118033988749894903, \quad
	&& D_{3,3} = 0.000000000000000222, \quad
	&& D_{3,4} = 0.809016994374947340, \\
	& D_{4,1} = -0.500000000000000000, \quad
	&& D_{4,2} = 1.545084971874737256, \quad
	&& D_{4,3} = -4.045084971874737256, \\
	& D_{4,4} = 3.000000000000000000.
\end{align*}
}
The Vandermonde matrix $V = [V_{n,m}]_{n,m=1}^N$ of a basis of DOPs up to degree $N-1$ is given by 
{\small
\begin{align*}
	& V_{1,1} = 0.500000000000000000, \quad
	&& V_{1,2} = -0.645497224367902800, \quad
	&& V_{1,3} = 0.499999999999999889, \\
	& V_{1,4} = -0.288675134594812921, \quad
	&& V_{2,1} = 0.500000000000000000, \quad
	&& V_{2,2} = -0.288675134594812866, \\
	& V_{2,3} = -0.500000000000000000, \quad
	&& V_{2,4} = 0.645497224367902800, \quad
	&& V_{3,1} = 0.500000000000000000, \\
	& V_{3,2} = 0.288675134594812866, \quad
	&& V_{3,3} = -0.500000000000000000, \quad
	&& V_{3,4} = -0.645497224367902800, \\
	& V_{4,1} = 0.500000000000000000, \quad
	&& V_{4,2} = 0.645497224367902800, \quad
	&& V_{4,3} = 0.499999999999999889, \\
	& V_{4,4} = 0.288675134594812921.
\end{align*}
}
Finally, once $\lambda_1,\dots,\lambda_N$ have been fixed, we get the dissipation matrix as $S = V \Lambda V^T$ and the DG-USBP operators as $D_{\pm} = D \pm P^{-1} S /2$.

\subsection{DG-USBP operators on five LGL nodes} 
\label{app:operators_LGL_5nodes} 

The five ($N=5$) LGL nodes and associate weights on $\Omega_{\rm ref} = [-1,1]$ are $x_1 = -1$, $x_2 = -\sqrt{3/7}$, $x_3 = 0$, $x_4 = -x_2$, $x_5 = -x_1$ and $p_1 = 1/10$, $p_2 = 49/90$, $p_3 = 32/45$, $p_4 = p_2$, $p_5 = p_1$.
The central SBP operator $D = [D_{n,m}]_{n,m=1}^N$ has the entries 
{\small
\begin{align*}
	& D_{1,1} = -5.000000000000000000, \quad
	&& D_{1,2} = 6.756502488724240862, \quad
	&& D_{1,3} = -2.666666666666667407, \\
	& D_{1,4} = 1.410164177942426988, \quad
	&& D_{1,5} = -0.500000000000000000, \quad
	&& D_{2,1} = -1.240990253030982648, \\
	& D_{2,2} = -0.000000000000000444, \quad
	&& D_{2,3} = 1.745743121887939342, \quad
	&& D_{2,4} = -0.763762615825973379, \\
	& D_{2,5} = 0.259009746969017129, \quad
	&& D_{3,1} = 0.374999999999999889, \quad
	&& D_{3,2} = -1.336584577695453246, \\
	& D_{3,3} = 0.000000000000000000, \quad
	&& D_{3,4} = 1.336584577695453024, \quad
	&& D_{3,5} = -0.374999999999999833, \\
	& D_{4,1} = -0.259009746969017129, \quad
	&& D_{4,2} = 0.763762615825973268, \quad
	&& D_{4,3} = -1.745743121887939342, \\
	& D_{4,4} = 0.000000000000000444, \quad
	&& D_{4,5} = 1.240990253030982648, \quad
	&& D_{5,1} = 0.500000000000000111, \\
	& D_{5,2} = -1.410164177942426766, \quad
	&& D_{5,3} = 2.666666666666667407, \quad
	&& D_{5,4} = -6.756502488724240862, \\
	& D_{5,5} = 5.000000000000000000, 
\end{align*}
}
The Vandermonde matrix $V = [V_{n,m}]_{n,m=1}^N$ of a basis of DOPs up to degree $N-1$ is given by 
{\small
\begin{align*}
	& V_{1,1} = 0.447213595499957928, \quad
	&& V_{1,2} = -0.591607978309961591, \quad
	&& V_{1,3} = 0.500000000000000000, \\
	& V_{1,4} = -0.387298334620741758, \quad
	&& V_{1,5} = 0.223606797749978908, \quad
	&& V_{2,1} = 0.447213595499957928, \\
	& V_{2,2} = -0.387298334620741702, \quad
	&& V_{2,3} = -0.166666666666666685, \quad
	&& V_{2,4} = 0.591607978309961591, \\
	& V_{2,5} = -0.521749194749950962, \quad
	&& V_{3,1} = 0.447213595499957928, \quad
	&& V_{3,2} = 0.000000000000000000, \\
	& V_{3,3} = -0.666666666666666741, \quad
	&& V_{3,4} = 0.000000000000000000, \quad
	&& V_{3,5} = 0.596284793999943941, \\
	& V_{4,1} = 0.447213595499957928, \quad
	&& V_{4,2} = 0.387298334620741702, \quad
	&& V_{4,3} = -0.166666666666666685, \\
	& V_{4,4} = -0.591607978309961591, \quad
	&& V_{4,5} = -0.521749194749950962, \quad
	&& V_{5,1} = 0.447213595499957928, \\
	& V_{5,2} = 0.591607978309961591, \quad
	&& V_{5,3} = 0.500000000000000000, \quad
	&& V_{5,4} = 0.387298334620741758, \\
	& V_{5,5} = 0.223606797749978908.
\end{align*}
}
Finally, once $\lambda_1,\dots,\lambda_N$ have been fixed, we get the dissipation matrix as $S = V \Lambda V^T$ and the DG-USBP operators as $D_{\pm} = D \pm P^{-1} S /2$.

\subsection{DG-USBP operators on six LGL nodes} 
\label{app:operators_LGL_6nodes} 

The six ($N=6$) LGL nodes and associate weights on $\Omega_{\rm ref} = [-1,1]$ are $x_1 = -1$, $x_2 = -\sqrt{ ( 7+2\sqrt{7} ) / 21 }$, $x_3 = -\sqrt{ ( 7-2\sqrt{7} ) / 21 }$, $x_4 = -x_3$, $x_5 = -x_2$, $x_6 = -x_1$ and $p_1 = 1/15$, $p_2 = (14-\sqrt{7})/30$, $p_3 = (14+\sqrt{7})/30$, $p_4 = p_3$, $p_5 = p_2$, $p_6 = p_1$. 
The central SBP operator $D = [D_{n,m}]_{n,m=1}^N$ has the entries 
{\small
\begin{align*}
	& D_{1,1} = -7.500000000000000000, \quad
    && D_{1,2} = 10.141415936319667424, \quad
    && D_{1,3} = -4.036187270305348740, \\
    & D_{1,4} = 2.244684648176165975, \quad
    && D_{1,5} = -1.349913314190487768, \quad
    && D_{1,6} = 0.500000000000000111, \\
    & D_{2,1} = -1.786364948339095315, \quad
    && D_{2,2} = 0.000000000000001110, \quad
    && D_{2,3} = 2.523426777429455203, \\
    & D_{2,4} = -1.152828158535929015, \quad
    && D_{2,5} = 0.653547507429800167, \quad
    && D_{2,6} = -0.237781177984231401, \\
    & D_{3,1} = 0.484951047853569239, \quad
    && D_{3,2} = -1.721256952830232834, \quad
    && D_{3,3} = 0.000000000000000222, \\
    & D_{3,4} = 1.752961966367866165, \quad
    && D_{3,5} = -0.786356672223240794, \quad
    && D_{3,6} = 0.269700610832039001, \\
    & D_{4,1} = -0.269700610832039001, \quad
    && D_{4,2} = 0.786356672223240682, \quad
    && D_{4,3} = -1.752961966367865942, \\
    & D_{4,4} = 0.000000000000000222, \quad
    && D_{4,5} = 1.721256952830233278, \quad
    && D_{4,6} = -0.484951047853569350, \\
    & D_{5,1} = 0.237781177984231346, \quad
    && D_{5,2} = -0.653547507429800167, \quad
    && D_{5,3} = 1.152828158535929237, \\
    & D_{5,4} = -2.523426777429455203, \quad
    && D_{5,5} = -0.000000000000000888, \quad
    && D_{5,6} = 1.786364948339095537, \\
	& D_{6,1} = -0.500000000000000000, \quad
	&& D_{6,2} = 1.349913314190487768, \quad
	&& D_{6,3} = -2.244684648176165531, \\
	& D_{6,4} = 4.036187270305348740, \quad
	&& D_{6,5} = -10.141415936319669200, \quad
	&& D_{6,6} = 7.500000000000000888.
\end{align*}
}
The Vandermonde matrix $V = [V_{n,m}]_{n,m=1}^N$ of a basis of DOPs up to degree $N-1$ is given by 
{\small
\begin{align*}
    & V_{1,1} = 0.408248290463863073, \quad
    && V_{1,2} = -0.547722557505166074, \quad
    && V_{1,3} = 0.483045891539647831, \\
    & V_{1,4} = -0.408248290463863017, \quad
    && V_{1,5} = 0.316227766016837830, \quad
    && V_{1,6} = -0.182574185835055497, \\
    & V_{2,1} = 0.408248290463863073, \quad
    && V_{2,2} = -0.419038058655589740, \quad
    && V_{2,3} = 0.032338332982759031, \\
    & V_{2,4} = 0.367654222400928044, \quad
    && V_{2,5} = -0.576443896275456780, \quad
    && V_{2,6} = 0.435014342463467985, \\
    & V_{3,1} = 0.408248290463863073, \quad
    && V_{3,2} = -0.156227735687855862, \quad
    && V_{3,3} = -0.515384224522407175, \\
    & V_{3,4} = 0.445155822251155409, \quad
    && V_{3,5} = 0.260216130258618783, \quad
    && V_{3,6} = -0.526715472069829382, \\
    & V_{4,1} = 0.408248290463863073, \quad
    && V_{4,2} = 0.156227735687855862, \quad
    && V_{4,3} = -0.515384224522407175, \\
    & V_{4,4} = -0.445155822251155409, \quad
    && V_{4,5} = 0.260216130258618783, \quad
    && V_{4,6} = 0.526715472069829382, \\
    & V_{5,1} = 0.408248290463863073, \quad
    && V_{5,2} = 0.419038058655589740, \quad
    && V_{5,3} = 0.032338332982759031, \\
    & V_{5,4} = -0.367654222400928044, \quad
	&& V_{5,5} = -0.576443896275456780, \quad
	&& V_{5,6} = -0.435014342463467985, \\
	& V_{6,1} = 0.408248290463863073, \quad
	&& V_{6,2} = 0.547722557505166074, \quad
	&& V_{6,3} = 0.483045891539647831, \\
	& V_{6,4} = 0.408248290463863017, \quad
	&& V_{6,5} = 0.316227766016837830, \quad
	&& V_{6,6} = 0.182574185835055497.
\end{align*}
}
Finally, once $\lambda_1,\dots,\lambda_N$ have been fixed, we get the dissipation matrix as $S = V \Lambda V^T$ and the DG-USBP operators as $D_{\pm} = D \pm P^{-1} S /2$.
\section{More examples of USBP operators} 
\label{app:examples} 

We exemplify the construction of USBP operators on the reference element $\Omega_{\rm ref} = [-1,1]$. 
\emph{
Henceforth, we round all reported numbers to the second decimal place for ease of presentation. 
}

\subsection{USBP operators on Gauss--Legendre points} 
\label{sub:examples_Legendre}

We demonstrate the construction of a degree two ($d=2$) diagonal-norm USBP operator on four ($N=4$) Gauss--Legendre points in $\Omega_{\rm ref} = [-1,1]$. 
Notably, the Gauss--Legendre points do not include the boundary points. 

Let us first address (P1). 
We start from a degree three diagonal-norm SBP operator, obtained by evaluating the derivatives of the corresponding Lagrange basis functions at the Gauss--Legendre grid points.  
The associated points and quadrature weights (diagonal entries of $P$) are $\mathbf{x} \approx \frac{1}{5} [ -43, -17, 17, 43 ]$ and $\mathbf{p} \approx \frac{1}{20} [ 7, 13, 13, 7 ]$, respectively. 
Moreover, the SBP and boundary operator are 
\begin{equation} 
	D \approx 
	\frac{1}{100}
	\begin{bmatrix} 
		333 & 486 & -211 & 58 \\ 
		-76 & -38 & 147 & -33 \\ 
		33 & -147 & 38 & 76 \\ 
		-58 & 211 & -486 & 333
	\end{bmatrix}, 
	\quad 
	B \approx 
	\frac{1}{100}
	\begin{bmatrix} 
		-153 & 81 & -40 & 11 \\ 
		0 & 0 & 0 & 0 \\ 
		0 & 0 & 0 & 0 \\ 
		-11 & 40 & -81 & 153
	\end{bmatrix}.
\end{equation} 
Note that the first and last row of $B$ corresponds to the function values of the Lagrange basis functions at the left and right boundary point, $x_L=-1$ and $x_R=1$, respectively.  

We next address (P2), finding a dissipation matrix $S$ with $S \mathbf{f} = \mathbf{0}$ if $f \in \mathcal{P}_2$ and $\mathbf{f}^T S \mathbf{f} < 0$ otherwise. 
We follow \Cref{sub:DOPs} and robustly construct $S$ using a DOP basis on the four Gauss--Legendre grid points.  
The resulting Vandermonde matrix of this DOP basis is 
\begin{equation} 
	V \approx 
	\begin{bmatrix} 
		1/2 & -33/50 & 1/2 & -13/50 \\ 
		1/2 & -13/50 & -1/2 & 33/50 \\ 
		1/2 & 13/50 & -1/2 & -33/50 \\ 
		1/2 & 33/50 & 1/2 & 13/50
	\end{bmatrix}. 
\end{equation} 
Consequently, we get the dissipation matrix as $S = V \Lambda V^T$ with $\Lambda = \diag(\lambda_1,\lambda_2,\lambda_3,\lambda_4)$. 
We choose the first three eigenvalues as zero to ensure that the USBP operator has degree two. 
Furthermore, we choose $\lambda_4 = -1$, which introduces artificial dissipation to the highest, unresolved, mode.\footnote{
Note that $\lambda_4 = -1$ is an arbitrary choice. 
Any negative value yields an admissible dissipation matrix.
}
The resulting dissipation matrix is 
\begin{equation} 
	S \approx 
	\frac{1}{100}
	\begin{bmatrix} 
		-7 & 17 & -17 & 7 \\ 
		17 & -43 & 43 & -17 \\  
		-17 & 43 & -43 & 17 \\  
		7 & -17 & 17 & -7 
	\end{bmatrix}. 
\end{equation} 

Finally, we get degree two diagonal-norm USBP operators on $[-1,1]$ as $D_{\pm} = D \pm P^{-1}S/2$, which yields 
\begin{equation} 
\begin{aligned}
	D_+ & \approx 
	\frac{1}{100}
	\begin{bmatrix} 
		-343 & 511 & -235 & 68 \\ 
		-63 & -72 & 180 & -46 \\
		20 & -114 & 5 & 89 \\
		-48 & 186 & -461 & 81 
	\end{bmatrix}, \\  
	D_- & \approx 
	\frac{1}{100}
	\begin{bmatrix} 
		-324 & 461 & -186 & 48 \\ 
		-89 & -5 & 114 & -20 \\
		46 & -180 & 72 & 63 \\
		-68 & 235 & -511 & 343 
	\end{bmatrix}. 
\end{aligned}
\end{equation}
The norm and boundary matrix of the degree two USBP operators are the same as those of the degree three SBP operator.

\subsection{Dense-norm USBP operators on equidistant points} 
\label{sub:examples_dense} 

We exemplify our construction procedure for USBP operators using a dense-norm (the norm matrix $P$ is non-diagonal) SBP operator of degree three ($d=3$) on four ($N=4$) equidistant points on $[0,1]$. 
The degree three dense-norm SBP operator we start from was derived in \cite[Section 4.2]{fernandez2014generalized} and has the following norm and derivative matrices: 
\begin{equation} 
	P \approx 
	\frac{1}{8}
	\begin{bmatrix} 
		2 & 1 & 0 & 0 \\ 
		1 & 10 & -2 & 0 \\  
		0 & -2 & 10 & 1 \\  
		0 & 0 & 1 & 2 
	\end{bmatrix}, \quad 
	D \approx 
	\frac{1}{6}
	\begin{bmatrix} 
		-11 & 18 & -9 & 2 \\ 
		-2 & -3 & 6 & -1 \\  
		1 & -6 & 3 & 2 \\  
		-2 & 9 & -18 & 11 
	\end{bmatrix}.
\end{equation} 

Let us address (P2), finding a dissipation matrix $S$ with $S \mathbf{f} = \mathbf{0}$ if $f \in \mathcal{P}_2$ and $\mathbf{f}^T S \mathbf{f} < 0$ otherwise. 
Once more, we follow \cref{sub:DOPs} and robustly construct $S$ using a DOP basis on the four equidistant grid points.  
The resulting Vandermonde matrix of this DOP basis is
\begin{equation} 
	V \approx 
	\begin{bmatrix} 
		1/2 & -67/100 & 1/2 & -22/100 \\ 
		1/2 & -22/100 & -1/2 & 67/100 \\ 
		1/2 & 22/100 & -1/2 & -67/100 \\ 
		1/2 & 67/100 & 1/2 & 22/100
	\end{bmatrix}. 
\end{equation} 
Consequently, we get the dissipation matrix as $S = V \Lambda V^T$ with $\Lambda = \diag(\lambda_1,\lambda_2,\lambda_3,\lambda_4)$. 
We choose the first three eigenvalues as zero to get USBP operators with degree two ($d=2$). 
Furthermore, we choose $\lambda_4 = -1$, which introduces artificial dissipation to the highest, unresolved mode. 
The resulting dissipation matrix is 
\begin{equation} 
	S \approx 
	\frac{1}{20}
	\begin{bmatrix} 
		-1 & 3 & -3 & 1 \\ 
		3 & -9 & 9 & -3 \\  
		-3 & 9 & -9 & 3 \\  
		1 & -3 & 3 & -1 
	\end{bmatrix}. 
\end{equation} 

Finally, we get degree two dense-norm USBP operators on four equidistant points on $[0,1]$ as $D_{\pm} = D \pm P^{-1}S/2$, which yields 
\begin{equation} 
\begin{aligned}
	D_+ & \approx 
	\frac{1}{100}
	\begin{bmatrix} 
		-189 & 317 & -167 & 39 \\ 
		-30 & -59 & 109 & -20 \\
		14 & -91 & 41 & 36 \\
		-28 & 133 & -283 & 178 
	\end{bmatrix}, \\  
	D_- & \approx 
	\frac{1}{100}
	\begin{bmatrix} 
		-178 & 283 & -133 & 28 \\
		-36 & -41 & 91 & -14 \\
		20 & -109 & 59 & 30 \\
		-39 & 167 & -317 & 189 
	\end{bmatrix}. 
\end{aligned}
\end{equation}
The norm and boundary matrix of the degree two USBP operators are the same as those of the degree three SBP operator. 

\small
\bibliographystyle{siamplain}
\bibliography{references}

\end{document}